\DeclareMathOperator{\Exp}{Exp}
\newtheorem{theorem}{Theorem}[section]
\newtheorem{lemma}[theorem]{Lemma}
\newtheorem{proposition}[theorem]{Proposition}
\newtheorem{corollary}[theorem]{Corollary}
\theoremstyle{definition}
\newtheorem{remark}[theorem]{Remark}
\newtheorem{definition}[theorem]{Definition}
 \newcommand{\ip}[1]{\langle #1 \rangle}
  \renewcommand{\tilde}{\widetilde}
  \def\b1{\text{\bf\large 1}}  
  \def\ip<#1>{\langle#1\rangle}   
  \newcommand{\GL}{\operatorname{GL}}
  \newcommand{\PGL}{\operatorname{PGL}}
  \newcommand{\SO}{\operatorname{SO}}
  \newcommand\elal{\Bbb{L}}
    \newcommand\xx{\Bbb{X}}
    \newcommand\exx{\Bbb{X}}
 \newcommand\bz{\Bbb{Z}}
\newcommand{\beqn}{\begin{equation}}
\newcommand{\eeqn}{\end{equation}}
\def\u.{^{\bullet}}
\newcommand{\bc}{\mathbb{C}}
\begin{document}

\title{Dimension of zero weight space: an algebro-geometric approach}

\author{Shrawan Kumar and Dipendra Prasad}

\maketitle

\section{Introduction}
Let $G$ be a connected, adjoint,  simple algebraic group
over the complex numbers $\mathbb{C}$ with  a
maximal torus $T$ and a Borel subgroup $B\supset T$. The study of
zero weight spaces in irreducible representations of $G$ has been a
topic of considerable interest; there are many works which study the
zero weight space as a representation space for the Weyl group. In this paper,
we study the variation on the dimension of the zero weight space as the
irreducible representation varies over the set of dominant integral weights for $T$
which are lattice points in  a certain polyhedral cone.

The theorem proved here asserts that the zero weight spaces have dimensions which are
piecewise polynomial  functions on the polyhedral cone of dominant integral weights.
The precise statement of the theorem is given below.

Let $\Lambda= \Lambda (T)$ be the character group of $T$ and
let $\Lambda^{+}\subset \Lambda$ (resp. $\Lambda^{++}$) be the
semigroup of dominant  (resp. dominant regular) weights. Then, by taking derivatives, we can identify
 $\Lambda$ with $Q$, where $Q$ is the root lattice (since $G$ is an adjoint group).
 For $\lambda\in \Lambda^{+}$, let $V(\lambda)$ be the
 irreducible $G$-module with highest weight $\lambda$.
Let $\mu_{0}:\Lambda^{+}\to\mathbb{Z}_{+}$ be the function:
$\mu_{0}=\dim V(\lambda)_{0}$, where $V(\lambda)_{0}$ is the
$0$-weight space of $V(\lambda)$.

Let $\Gamma=\Gamma_G \subset Q$ be the sublattice as in Theorem \ref{thm3.1}.

Also,
let  $\Lambda (\mathbb{R}):= \Lambda\otimes_{\mathbb{Z}}\mathbb{R}$ and let
  $\Lambda^{++} (\mathbb{R})$ be the cone inside
 $\Lambda (\mathbb{R})$ generated by  $\Lambda^{++}$.
Let $C_{1}, \dots, C_{N}\subset \Lambda^{++} (\mathbb{R})$ be the chambers
 (i.e., the GIT
classes  in $\Lambda^{++} (\mathbb{R})$ of
maximal dimension: equal to the dimension
of
$\Lambda(\mathbb{R})$, with respect to the $T$-action) (see Section \ref{sec2}).

For any $w\in W$ and $1\leq i \leq \ell$, define the hyperplane
$$H_{w,i}:=\{\lambda \in \Lambda(\mathbb{R}): \lambda(wx_i)=0\},$$
where $W$ is the Weyl group of $G$ and $\{x_1, \dots, x_\ell\}$ is the basis
of $\mathfrak{t}$ dual to the
basis of $\mathfrak{t}^*$ given by the simple roots. Then, by virtue of
Corollary \ref{chamber}, $C_{1}, \dots, C_{N}$ are the connected components of
$$ \Lambda^{++}(\mathbb{R})\setminus \bigl(\cup_{w\in W, 1\leq i\leq \ell}
\,H_{w,i}\bigr).$$

With this notation, we have the following main result of our paper
(cf. Theorem \ref{thm3.2}).

\begin{theorem}\label{thmA}
 Let
$\overline{\mu}=\mu+\Gamma$ be a coset of $\Gamma$ in $Q$.
 Then, for any GIT class $C_{k}$, $1\leq k \leq N$,
 there exists a polynomial
$f_{\overline{\mu},k}:\Lambda(\mathbb{R})\to\mathbb{R}$ with
rational coefficients of degree $\leq \dim_{\mathbb{C}} X-\ell$, such
that
\begin{equation}
f_{\overline{\mu},k}(\lambda)=\mu_{0}(\lambda),\quad\text{for
  all}\quad \lambda\in \bar{C}_{k}\cap \overline{\mu},
\end{equation}
where  $\overline{C}_{k}$ is the closure of  $C_{k}$ inside $\Lambda(\mathbb{R})$
and $X$ is the full flag variety $G/B$.
 Further, $f_{\Gamma,k}$ has
constant term 1.
\end{theorem}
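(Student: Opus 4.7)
The plan is to interpret $\mu_0(\lambda)$ geometrically via the Borel--Weil theorem as the dimension of a space of $T$-invariant sections on $X = G/B$, and then to compute this dimension on each GIT chamber using Hirzebruch--Riemann--Roch on the associated GIT quotient.

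First I would invoke Borel--Weil to identify $V(\lambda)^*$ canonically with $H^0(X, \mathcal{L}_\lambda)$, so that
\[
\mu_0(\lambda) = \dim V(\lambda)_0 = \dim H^0(X, \mathcal{L}_\lambda)^T.
\]
Next, by the Dolgachev--Hu--Thaddeus theory of variation of GIT, the $T$-semistable locus $X^{ss}(\mathcal{L}_\lambda)$ depends only on the GIT class of $\lambda$, so it is constant as $\lambda$ ranges over a single chamber $C_k$. Let $Y_k := X^{ss}(C_k)/\!/T$ denote the associated GIT quotient, a projective variety of dimension $\dim_{\mathbb{C}} X - \ell$. The sublattice $\Gamma$ from Theorem \ref{thm3.1} should characterize precisely those $\lambda \in Q$ for which $\mathcal{L}_\lambda$ descends to a genuine line bundle on $Y_k$; for a general coset $\bar\mu = \mu + \Gamma$, descent requires a fixed $\mu$-dependent twist, producing a family of line bundles $\mathcal{M}_\lambda$ on $Y_k$ depending affine-linearly on $\lambda \in \bar\mu$ and satisfying $H^0(X, \mathcal{L}_\lambda)^T \cong H^0(Y_k, \mathcal{M}_\lambda)$ for $\lambda \in C_k \cap \bar\mu$.

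I would then apply Hirzebruch--Riemann--Roch on $Y_k$ (or on a Kirwan-type partial desingularization if $Y_k$ has quotient singularities): the Euler characteristic
\[
\chi(Y_k, \mathcal{M}_\lambda) = \int_{Y_k} \Ch(\mathcal{M}_\lambda)\,\operatorname{Td}(Y_k)
\]
is manifestly a polynomial in $\lambda$ with rational coefficients of degree at most $\dim Y_k = \dim_{\mathbb{C}} X - \ell$, producing the candidate $f_{\bar\mu, k}$. To identify $\chi$ with $\dim H^0$, I would use Kawamata--Viehweg vanishing for $\lambda$ in the interior of $C_k$, where $\mathcal{M}_\lambda$ is ample up to a suitable power.

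The main obstacle I foresee is twofold. First, constructing the descended bundle $\mathcal{M}_\lambda$ in the presence of nontrivial stabilizers and possible quotient singularities on $Y_k$, which is where the lattice $\Gamma$ enters concretely (either via Kirwan's partial desingularization, or by working with $\mathbb{Q}$-line bundles and clearing denominators). Second, extending the identity $f_{\bar\mu, k}(\lambda) = \mu_0(\lambda)$ from the interior $C_k \cap \bar\mu$ to the full closure $\bar C_k \cap \bar\mu$, because on the boundary the semistable locus $X^{ss}(\mathcal{L}_\lambda)$ can be strictly larger than $X^{ss}(C_k)$ and the descended bundle may fail to be ample; this will be handled by uniform higher-cohomology vanishing together with a semicontinuity/limit argument. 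The constant-term assertion for $f_{\Gamma, k}$ is then immediate: since $0 \in \Gamma \cap \bar C_k$ and $V(0)$ is the trivial one-dimensional representation, $f_{\Gamma,k}(0) = \mu_0(0) = 1$.
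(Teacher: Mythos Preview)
Your proposal follows essentially the same approach as the paper: Borel--Weil, descent to the GIT quotient $X_T(C_k)$, and Riemann--Roch on the quotient. Two technical points where the paper sharpens your sketch are worth noting. First, for $\mu\notin\Gamma$ the pushforward $\pi_*^T(\mathcal{L}(\mu))$ is only a reflexive coherent sheaf, not a line bundle; the paper therefore writes $\pi_*^T(\mathcal{L}(\lambda))\simeq \pi_*^T(\mathcal{L}(\mu))\otimes\widehat{\mathcal{L}}_{C_k}(\lambda-\mu)$ via the projection formula and applies Fulton's Riemann--Roch for singular varieties to this sheaf, rather than working with a family of line bundles. Second, both the higher-cohomology vanishing on the quotient and the extension to the closure $\bar C_k$ are obtained not from Kawamata--Viehweg or a semicontinuity argument, but directly from Teleman's ``quantization commutes with reduction'' results (specifically [T, Remark 3.3(i) and Theorem 3.2.a], using that $q^*\mathcal{L}^P(\lambda)$ is adapted to the stratification coming from a nearby interior point); this is the cleanest way past the two obstacles you correctly flagged.
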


The proof of  the above theorem relies on Geometric Invariant Theory (GIT).
Specifically,
we realize the function $\mu_0$ restricted to  $\overline{C}_{k}\cap \Lambda$
as an Euler-Poincar\'e characteristic of a reflexive sheaf on a certain
GIT quotient (depending on $C_k$) of  $X=G/B$ via the
maximal torus $T$. Then, one can use the Riemann-Roch theorem for singular varieties to
calculate this Euler-Poincar\'e characteristic. From this calculation, we conclude
 that the function $\mu_0$ restricted to  $\overline{C}_{k}\cap (\mu+\Gamma)$
 is a polynomial function. The result on descent of the homogeneous line bundles
 on $X$ to the GIT quotient plays a crucial role (cf. Lemma \ref{lem3.3}).

We end the paper by determining these piecewise polynomials for the groups of type
$A_2$ and $B_2$ (in Section \ref{sec5}) and $A_3$ (in Section \ref{sec6}),
all of which we
do via some well-known branching laws.

 The results of the paper can easily be extended to show the  piecewise polynomial
behavior of the dimension of any weight space (of a fixed weight $\mu$) in any
finite dimensional irreducible representation $V(\lambda)$.

By a similar proof, we can also obtain a piecewise polynomial
behavior of the dimension of $H$-invariant subspace in any
finite dimensional irreducible representation $V(\lambda)$ of $G$, where
$H\subset G$ is a
reductive subgroup. However, the results in this general case are not as precise
(cf. Remark \ref{newremark}).

It should be mentioned that Meinrenken-Sjamaar [MS] have obtained a result similar to
our above result Theorem \ref{thmA} (also in the generality of $H$-invariants)
by using techniques from Symplectic Geometry.
But, their result in the case of $T$-invariants is less precise than our
Theorem \ref{thmA}.

The example of $\PGL_4$  suggests that the part of  our theorem
describing the domains of validity of these piecewise polynomial functions is
not optimal. Moreover,  our
theorem says nothing about the explicit nature of these polynomials. So this work
should be taken only as a
step towards  the eventual goal  of describing the
variation of the dimension of the zero weight space as the
irreducible representation varies over the set of dominant integral weights for $T$.

\vskip2ex

\noindent
{\bf Acknowledgements:} We are grateful to M. Brion, N. Ressayre, C. Teleman and M. Vergne
 for some very helpful correspondences. We especially thank
Vinay Wagh without whose help the computations on $\GL_4$ given in Section \ref{sec6}
could not have been done.
The first author was supported by the NSF
 grant no. DMS-1201310.
\section{Notation}\label{sec2}

Let $G$ be a connected, adjoint,  semisimple algebraic group
over the complex numbers $\mathbb{C}$. Fix a Borel subgroup $B$ and a
maximal torus $T\subset B$. We denote their Lie algebras by the corresponding
Gothic characters: $\mathfrak{g}$, $\mathfrak{b}$ and  $\mathfrak{t}$
 respectively. Let $R^{+}\subset\mathfrak{t}^{*}$ be the
set of positive roots (i.e., the roots of $B$) and let $\Delta
=\{\alpha_{1},\ldots,\alpha_{\ell}\}\subset R^{+}$ be the set of simple
roots. Let $Q=\bigoplus\limits^{\ell}_{i=1}\mathbb{Z}\alpha_{i}$ be the root
lattice. Then, the group of characters   $\Lambda$ of $T$ can be identified with
 $Q$ (since $G$ is adjoint) by taking the derivative. We will often make this
 identification.  Let $\Lambda^{+}$ (resp.  $\Lambda^{++}$)  be the
semigroup of dominant (resp. dominant regular) weights, i.e.,
$$
\Lambda^{+}:= \{\lambda\in \Lambda:\lambda (\alpha_{i}^{\vee})\in
\mathbb{Z}_{+},\text{~~ for all the simple coroots~ } \alpha^{\vee}_{i}\},
$$
and
$$\Lambda^{++}:= \{\lambda\in \Lambda^+:\lambda (\alpha_{i}^{\vee})\geq 1
\text{~~ for all ~~ } \alpha^{\vee}_{i}\}.$$
Then, $\Lambda^{+}$ bijectively parameterizes the isomorphism classes of finite dimensional
irreducible $G$-modules. For $\lambda\in \Lambda^{+}$, let $V(\lambda)$ be the
 corresponding irreducible $G$-module (with highest weight $\lambda$).

Let $W:=N(T)/T$ be the Weyl group of $G$, where  $N(T)$ is the normalizer of $T$ in $G$.
Let  $\Lambda (\mathbb{R}):= \Lambda\otimes_{\mathbb{Z}}\mathbb{R}$ and let
 $\Lambda^+ (\mathbb{R})$ (resp.  $\Lambda^{++} (\mathbb{R})$) be the cone inside
 $\Lambda (\mathbb{R})$ generated by  $\Lambda^{+}$ (resp.  $\Lambda^{++}$).
Any element $\lambda\in \Lambda (\mathbb{R})$ can uniquely be written as
\begin{equation}\label{eq1}
\lambda=\sum_{i=1}^\ell \,z_{i}\omega_{i},\quad
z_{i}\in \mathbb{R},
\end{equation}
where $\omega_{i}\in \Lambda^{+}(\mathbb{R})$ is the $i$-th fundamental weight:
$$
\omega_{i}(\alpha^{\vee}_{j})=\delta_{i,j}.
$$ Then,
$$
\Lambda^{+}(\mathbb{R})=\oplus_{i=1}^\ell
  \mathbb{R}_{\geq 0}\, \omega_{i},\,\,\,\,\Lambda^{++}(\mathbb{R})=\oplus_{i=1}^\ell
  \mathbb{R}_{>0}\, \omega_{i},
$$
where $\mathbb{R}_{\geq 0}$ (resp.  $\mathbb{R}_{> 0}$)  is the set of  non-negative
 (resp. strictly positive)  real numbers.
We will denote any $\lambda\in \Lambda (\mathbb{R})$ in the coordinates
$z_{\lambda}=(z_{i})_{1\leq i \leq \ell}$ as in \eqref{eq1}.

A function $f:S\subset \Lambda^{+}\to \mathbb{Q}$ defined on a
subset $S$ of $\Lambda^{+}$ is called a {\em polynomial
  function} if there exists a polynomial $\hat{f}(z)\in
\mathbb{Q}[z_{i}]_{1\leq i \leq \ell}$ such that
$f(\lambda)=\hat{f}(z_{\lambda})$, for all $\lambda\in S$.

For any $\lambda\in \Lambda$, we have the $G$-equivariant  line bundle
$\mathcal{L}(\lambda)$ on $X:=G/B$ associated to the principal
$B$-bundle $G\to
G/B$ via the character ${\lambda}^{-1}$ of $B$, i.e.,
$$
\mathcal{L}(\lambda)=G\mathop{\times}^{B}\mathbb{C}_{-\lambda}\to
G/B,
$$
where $\mathbb{C}_{-\lambda}$ denotes the one dimensional $T$-module
with weight $-\lambda$. (Observe that for any $\lambda \in \Lambda$,
the $T$-module structure on $\mathbb{C}_{-\lambda}$ extends to a
$B$-module structure).
The line bundle $\mathcal{L}(\lambda)$ is ample if and only if
$\lambda\in \Lambda^{++}$.

Following Dolgachev-Hu \cite{DH}, $\lambda$, $\mu\in
\Lambda^{++}(\mathbb{R})$ are said to be {\em GIT equivalent} if
$X^{ss}(\lambda)=X^{ss}(\mu)$, where $X^{ss}(\lambda)$
denotes the set of semistable points in $X$ with respect to the
element $\lambda\in \Lambda^{++}(\mathbb{R})$. Recall that if
 $\lambda \in
\Lambda^{++}(\mathbb{Q}):=\oplus_{i=1}^\ell
  \mathbb{Q}_{>0}\, \omega_{i},$ then  $X^{ss}(\lambda)$ is the set of
$T$-semistable points of $X$ with respect to the $T$-equivariant line bundle
$\mathcal{L}(d\lambda)$, for any positive integer $d$ such that $d\lambda\in
 \Lambda^{++}$.

\begin{definition}
By a {\em rational polyhedral cone} $C$ in
$\Lambda^{++}(\mathbb{R})$, one means a subset of
$\Lambda^{++}(\mathbb{R})$ defined by a finite number of linear
inequalities with rational coefficients.
\end{definition}

For a $\mathbb{R}$-linear form $f$ on $\Lambda(\mathbb{R})$ which
is non-negative on $C$, the set of points $c\in C$ such that $f(c)=0$
is called a {\em face} of $C$.

By \cite{DH} or \cite[Proposition 7]{R}, any GIT equivalence class in
$\Lambda^{++}(\mathbb{R})$ is the relative interior of a rational
polyhedral cone in $\Lambda^{++}(\mathbb{R})$ and moreover there
are only finitely many GIT classes (cf. [DH, Theorem 1.3.9] or
[R, Theorem 3]). Let $C_{1}, \dots, C_{N}$ be the GIT
classes of maximal dimension, i.e., of dimension equal to that of
$\Lambda(\mathbb{R})$. These are called {\em chambers}.
Let $X_T(C_k)$ denote the GIT quotient $X^{ss}(\lambda)//T$
for any $\lambda \in C_{k}$.

Since for any $\lambda \in \Lambda^+$, the  irreducible module $V(\lambda)$
has its  zero weight space
$V(\lambda)_{0}$  nonzero, we have
 $X^{ss}(\lambda)\neq \emptyset$ for any $\lambda\in
\Lambda^{++}(\mathbb{R})$.

Let $\mathfrak{t}_{+}:=\{x\in \mathfrak{t}:\alpha_{i}(x)\geq 0$,
for all the simple roots $\alpha_{i}\}$ be the dominant chamber.
Clearly,
\begin{equation}
\mathfrak{t}_{+}=\bigoplus^{\ell}_{i=1}\mathbb{R}_{+}x_{i},\label{eq7}
\end{equation}
where $\{x_{i}\}$ is the basis of $\mathfrak{t}$ dual to the basis of
$\mathfrak{t}^{*}$ consisting of the simple roots, i.e.,
\begin{equation}\label{eq101}
\alpha_{i}(x_{j})=\delta_{i,j}.
\end{equation}

\section{Descent of line bundles to GIT quotients and determination of chambers}
\label{sec3}

There exists the largest lattice $\Gamma\subset Q$ such that for any
 $\lambda\in
\Lambda^{++}\cap \Gamma$, the homogeneous line bundle
$\mathcal{L}(\lambda)$ descends as a line bundle
$\widehat{\mathcal{L}}(\lambda)$ on the GIT quotient
$X_T(\lambda)$.
In fact, $\Gamma$ is determined precisely in \cite[Theorem 3.10]{Ku}
for any simple $G$, which we recall below.

\begin{theorem}\label{thm3.1}
For any simple $G$, $\Gamma=\Gamma_{G}$ is the following lattice
(following the indexing in \cite[Planche I-IX]{B}).
\begin{itemize}
\item[\rm(1)] $G$ of type $A_{\ell} \, (\ell\geq 1):Q$

\item[\rm(2)] $G$ of type $B_{\ell}\, (\ell\geq 3):2Q$

\item[\rm(3)] $G$ of type $C_{\ell}\, (\ell\geq 2):\mathbb{Z}2\alpha_1+\dots
+\mathbb{Z}2\alpha_{\ell -1}+\mathbb{Z}\alpha_{\ell }$

\item[\rm(4)] $G$ of type $D_{4}$:
  $\{n_{1}\alpha_1+2n_{2}\alpha_{2}+n_{3}\alpha_{3}+n_{4}\alpha_{4}:n_{i}\in
  \mathbb{Z}\text{~~ and~~ } n_{1}+n_{3}+n_{4}\text{~~ is even}\}$

\item[\rm(5)] $G$ of type $D_{\ell}\, (\ell\geq 5):$
$$
\{2 n_{1}\alpha_{1}+2n_{2}\alpha_{2}+\cdots+2
n_{\ell-2}\alpha_{\ell-2}+n_{\ell-1}\alpha_{\ell-1}+n_{\ell}\alpha_{\ell}:
 n_{i}\in
\mathbb{Z}\text{~~ and~~ } n_{\ell-1}+n_{\ell}\text{~~ is even}\}
$$

\item[\rm(6)] $G$ of type $G_{2}$: $\mathbb{Z}6
  \alpha_{1}+\mathbb{Z}2\alpha_{2}$

\item[\rm(7)] $G$ of type $F_{4}$:
  $\mathbb{Z}6\alpha_{1}+\mathbb{Z}6\alpha_{2}+\mathbb{Z}12\alpha_{3}+\mathbb{Z}12\alpha_{4}$

\item[\rm(8)] $G$ of type $E_{6}$: $6\tilde{\Lambda}$

\item[\rm(9)] $G$ of type $E_{7}$: $12\tilde{\Lambda}$

\item[\rm(10)] $G$ of type $E_{8}$: $60Q$,
\end{itemize}
where $\tilde{\Lambda}$ is the lattice generated by the fundamental weights.
\end{theorem}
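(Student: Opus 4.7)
The plan is to invoke Kempf's descent lemma for torus actions on projective varieties: the $T$-equivariant line bundle $\mathcal{L}(\lambda)$ descends to $X_T(\lambda)$ if and only if, for every closed $T$-orbit $T\cdot x$ inside $X^{ss}(\lambda)$, the isotropy subgroup $T_x$ acts trivially on the fiber $\mathcal{L}(\lambda)|_x$. So the first step is to classify the possible isotropy subgroups of the $T$-action on $X = G/B$. Using the Bruhat decomposition, the isotropy at a point $x = uwB$ (for $u$ in a suitable unipotent subgroup) is the common kernel of a collection of roots of the form $w\beta$, where $\beta$ runs through the subset of positive roots selected by the Schubert cell containing $x$. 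Consequently, the stabilizer subtori are precisely $W$-translates of common kernels of subsets of the simple coroots.

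Next, I would check that every such proper subtorus $T'\subsetneq T$ is actually realized as the isotropy of a point whose $T$-orbit is closed in $X^{ss}(\lambda)$ for some $\lambda\in \Lambda^{++}$. The $T$-character on $\mathcal{L}(\lambda)|_x$ at a point $x$ whose Bruhat class corresponds to $w\in W$ is $w\lambda$ (up to sign), so Kempf's criterion becomes: $w\lambda$ must lie in the annihilator of $\operatorname{Lie}(T_x)$ for every such $(w,T_x)$. Intersecting the linear conditions arising from all such pairs singles out a sublattice of $Q$, and that sublattice is by construction the maximal one, hence equals $\Gamma_G$.

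Translating these linear conditions into integrality conditions on the coefficients $n_i$ of $\lambda = \sum_i n_i\alpha_i$ requires passing through the Cartan matrix, since vanishing of $w\lambda$ on a coroot subspace is naturally phrased via pairings with fundamental coweights. In each Dynkin type the answer is governed by the arithmetic of the Cartan matrix and, in particular, by the order of the fundamental group of the simply connected cover, which accounts for the factors $6, 12, 60$ in the exceptional types and for the ``parity'' conditions on specific coefficients in types $D_\ell$ and $C_\ell$. Along the way one uses the explicit tables of fundamental weights in terms of simple roots (as in \cite[Planches I--IX]{B}).

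The main obstacle is carrying out the case-by-case computation, particularly in the exceptional types $F_4, E_6, E_7, E_8$, where the critical subtori giving the sharpest constraints must be identified explicitly. For the classical types, symmetry and a uniform analysis of simple coroots cut the work down considerably. For the exceptional cases, however, certifying that $\Gamma_G$ is not just contained in but equal to the claimed lattice forces one to exhibit, for each candidate $\lambda\notin \Gamma_G$, an actual closed $T$-orbit in some $X^{ss}(\lambda)$ at which descent fails; this requires a hands-on construction of a semistable point with a prescribed nontrivial stabilizer, and is where I expect the work to concentrate.
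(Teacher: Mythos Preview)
The paper does not prove Theorem~\ref{thm3.1} at all: it is quoted verbatim from \cite[Theorem~3.10]{Ku}, with the sentence ``In fact, $\Gamma$ is determined precisely in \cite[Theorem~3.10]{Ku} for any simple $G$, which we recall below.'' So there is no in-paper argument to compare against; your outline is, in broad strokes, the strategy actually carried out in \cite{Ku} (Kempf's descent criterion, identification of the relevant isotropy groups at semistable points, and a type-by-type computation using the Bourbaki tables).

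That said, one step in your sketch is not right and would mislead the computation. You write that the factors $6$, $12$, $60$ and the parity constraints are ``governed by \ldots\ the order of the fundamental group of the simply connected cover.'' But for $G_2$, $F_4$, and $E_8$ the fundamental group is trivial, yet $\Gamma_G$ is a proper sublattice of $Q$ (with index $12$, $6^2\cdot 12^2$, and $60^8$ respectively); likewise for $E_6$ and $E_7$ the fundamental group has order $3$ and $2$, not $6$ and $12$. The integers that actually appear come from the denominators in the expression of the fundamental weights $\omega_i$ in terms of the simple roots (equivalently, from the inverse Cartan matrix), combined with the specific finite isotropy subgroups $T_x\subset T$ that occur at semistable points with closed orbit. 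In \cite{Ku} these isotropies are analysed via the Levi subgroups $L\supset T$: a semistable point $gB$ with closed orbit lies in some $L\cdot w^{-1}B/B$, and the condition becomes that $w^{-1}\lambda$, viewed as a character of the center $Z(L)$, be trivial. So the correct bookkeeping is over pairs $(L,w)$ with $L$ a proper Levi and $w$ a minimal coset representative, not over ``subsets of simple coroots'' alone, and the resulting congruences are what produce the listed lattices. If you rework the exceptional cases with this in mind, your plan goes through; as written, the fundamental-group heuristic would give the wrong answer in types $G_2$, $F_4$, $E_6$, $E_7$, $E_8$.
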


\begin{definition}\label{git} Let $S$ be any connected  reductive algebraic group
acting on a   projective variety  $\exx$ and let  $\elal$ be
an $S$-equivariant line bundle on $\exx$. Let $O(S)$ be the set of all one
parameter
subgroups (for short OPS) in $S$.
 Take any $x\in \exx$ and
 $\delta \in O(S)$. Then,    $\exx$ being projective,  the morphism ${\delta}_x:\Bbb{G}_m\to X$ given by
$t\mapsto \delta(t)x$ extends to a morphism $\tilde{\delta}_x : \Bbb{A}^1\to X$.
Following Mumford, define a number $\mu^{\elal}(x,\delta)$ as follows:
Let $x_o\in X$ be the point  $\tilde{\delta}_x(0)$. Since $x_o$ is
$\Bbb{G}_m$-invariant
via $\delta$, the fiber of  $\elal$ over $x_o$ is a
$\Bbb{G}_m$-module; in particular, it is given by a character of $\Bbb{G}_m$. This
 integer is defined as  $\mu^{\elal}(x,\delta)$.

 Let $V$ be a finite dimensional representation of $S$ and let
$i:\xx\hookrightarrow \Bbb{P}(V)$ be an $S$-equivariant embedding.
Take $\elal:=i^*(\mathcal O(1))$. Let $\lambda \in O(S)$  and let
$\{e_1,\dots,e_n\}$ be a basis of $V$ consisting of eigenvectors,
i.e.,
 $\lambda(t)\cdot e_l=t^{{\lambda}_l}e_l$, for $l=1,\dots,n$. For any $x\in \xx$,
write $i(x)=[\sum_{l=1}^n x_le_l]$. Then, it is easy to see
that, we have ([MFK, Proposition 2.3, page 51])
\begin{equation}\label{muvalue}\mu^{\elal}(x,\lambda)=\max_{l: x_l\neq 0}(-{\lambda}_l).
\end{equation}

\end{definition}

We record the following standard properties of $\mu^{\elal}(x,\delta)$ (cf.
 [MFK, Chap. 2, $\S$1]):
\begin{proposition}\label{propn3.1} For any $x\in \exx$ and $\delta \in O(S)$, we have
 the following (for any $S$-equivariant line bundles
$\elal, \elal_1, \elal_2$):
\begin{enumerate}
\item[(a)]
$\mu^{\elal_1\otimes\elal_2}(x,\delta)=\mu^{\elal_1}(x,\delta)+\mu^{\elal_2}(x,\delta).$
\item[(b)] If $\mu^{\elal}(x,\delta)=0$, then any element of $H^0(\exx,\elal)^S$
which does not vanish at $x$ does not vanish at $\lim_{t\to 0}\delta(t)x$ as well.
\item[(c)] For any projective $S$-variety $\exx'$ together with an $S$-equivariant morphism
$f:\exx'\to \exx$ and any $x'\in \exx'$, we have
$\mu^{f^*\elal}(x',\delta)=\mu^{\elal}(f(x'),\delta).$
\item[(d)] (Hilbert-Mumford criterion) Assume that $\elal$ is ample. Then, $x\in\exx$ is
 semistable (resp. stable)  (with respect to $\elal$) if
and only if $\mu^{\elal}(x,\delta)\geq 0$ (resp.  $\mu^{\elal}(x,\delta)> 0$), for all
non-constant  $\delta\in O(S)$.
\end{enumerate}
\end{proposition}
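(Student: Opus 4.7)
The plan is to unpack the intrinsic definition of $\mu^{\elal}(x,\delta)$ as the character by which $\Bbb{G}_m$ acts on the fiber $\elal_{x_o}$ at the limit point $x_o=\tilde{\delta}_x(0)$, and verify each of (a)--(d) in turn. Parts (a)--(c) are formal consequences of this definition together with compatibility of fibers under tensor products and pullbacks; part (d) is the Hilbert--Mumford numerical criterion, which is being recorded rather than reproved and for which one invokes [MFK, Chap. 2, Theorem 2.1].

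For (a), I would note that $(\elal_1\otimes \elal_2)_{x_o}=(\elal_1)_{x_o}\otimes (\elal_2)_{x_o}$ as $\Bbb{G}_m$-modules, and the character of a tensor product of one-dimensional $\Bbb{G}_m$-modules is the sum of the individual characters. For (c), $S$-equivariance of $f$ gives $f\circ \tilde{\delta}_{x'}=\tilde{\delta}_{f(x')}$, so in particular $f(x'_o)=f(x')_o$; the canonical equivariant identification $(f^*\elal)_{x'_o}\cong \elal_{f(x'_o)}$ of $\Bbb{G}_m$-modules then forces the two characters to coincide, giving $\mu^{f^*\elal}(x',\delta)=\mu^{\elal}(f(x'),\delta)$.

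For (b), consider the pullback $\tilde{\delta}_x^{*}\elal$, a $\Bbb{G}_m$-equivariant line bundle on $\Bbb{A}^1$ whose fiber at $0$ carries character $\mu^{\elal}(x,\delta)=0$. Since every line bundle on $\Bbb{A}^1$ is trivial, one may choose an equivariant trivialization $\tilde{\delta}_x^{*}\elal \cong \Bbb{A}^1\times\Bbb{C}$ in which $\Bbb{G}_m$ acts trivially on the $\Bbb{C}$-factor; in this trivialization a $\Bbb{G}_m$-invariant section becomes a $\Bbb{G}_m$-invariant regular function on $\Bbb{A}^1$, i.e., a constant. Consequently, if the pullback of $s\in H^0(\exx,\elal)^S$ along $\tilde{\delta}_x$ is nonzero at $1\in\Bbb{A}^1$ (the point $x$), it is a nonzero constant, hence nonzero at $0=\tilde{\delta}_x(0)$, which corresponds to $\lim_{t\to 0}\delta(t)x$.

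The main obstacle is part (d). The easy direction --- if $x$ is semistable, then $\mu^{\elal}(x,\delta)\geq 0$ for every non-constant $\delta$ --- follows from the analysis used in (b): for an $S$-invariant section $s$ of some positive tensor power $\elal^{\otimes n}$ with $s(x)\neq 0$, the pullback along $\tilde{\delta}_x$ is a $\Bbb{G}_m$-invariant regular section of an equivariant line bundle of weight $n\mu^{\elal}(x,\delta)$ at the origin, and such a section on $\Bbb{A}^1$ can be nonzero only when $n\mu^{\elal}(x,\delta)\geq 0$. The hard direction --- manufacturing an $S$-invariant section not vanishing at $x$ from the numerical hypothesis --- is the substantive content of Mumford's theorem and would be invoked from [MFK, Chap. 2]; the sharpened stable version follows from the same source.
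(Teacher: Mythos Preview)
Your arguments for (a)--(d) are correct. The paper itself does not give a proof of this proposition: it simply records these properties as standard, with a parenthetical reference to [MFK, Chap.~2, \S1], and moves on. So in effect you have supplied more than the paper does --- direct fiberwise arguments for (a)--(c), and the ``easy'' direction of (d) via pullback to $\Bbb{A}^1$ --- while, like the paper, deferring the substantive direction of the Hilbert--Mumford criterion to Mumford's original proof. There is nothing to correct; your write-up is an expanded version of what the paper leaves as a citation.
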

\begin{lemma}\label{prop3.4}
For any  $\lambda\in
\Lambda^{++}$, the set $X^{s}(\lambda)$ of stable points
(in $X^{ss}(\lambda)$) is nonempty.
\end{lemma}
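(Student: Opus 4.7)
The plan is to apply the Hilbert--Mumford numerical criterion (Proposition \ref{propn3.1}(d)) to the ample $T$-equivariant line bundle $\mathcal{L}(\lambda)$ and exhibit an explicit stable point via a weight-polytope argument. First I would embed $X = G/B$ into $\mathbb{P}(V(\lambda))$ through $gB \mapsto [gv_\lambda]$, where $v_\lambda$ denotes a highest weight vector; under this embedding (Borel--Weil), $\mathcal{L}(\lambda)$ is the restriction of $\mathcal{O}_{\mathbb{P}(V(\lambda))}(1)$. Given $[v] \in X$ with $T$-weight decomposition $v = \sum_\mu v_\mu$ in $V(\lambda)$, formula \eqref{muvalue} gives
\[
\mu^{\mathcal{L}(\lambda)}([v],\delta) \;=\; \max_{\mu : v_\mu \neq 0} \bigl(-\langle \mu,\delta\rangle\bigr),\qquad \delta \in O(T).
\]
By Proposition \ref{propn3.1}(d), $[v]$ is $T$-stable precisely when, for every non-constant $\delta$, some weight $\mu$ in $\operatorname{supp}(v)$ satisfies $\langle \mu,\delta\rangle < 0$; equivalently, precisely when $0$ lies in the Euclidean interior of $\operatorname{conv}(\operatorname{supp}(v))$ as a subset of $\Lambda(\mathbb{R})$.

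Next I would construct a point whose weight support contains the entire Weyl orbit $W\lambda$. For each $w \in W$ fix a lift $\dot w \in N(T)$; since $\dot w v_\lambda$ spans the one-dimensional extremal weight space $V(\lambda)_{w\lambda}$, the set $U_w := \{g \in G : (gv_\lambda)_{w\lambda} \neq 0\}$ is the complement of the zero locus of a nonzero linear functional pulled back along $g \mapsto gv_\lambda$, hence a non-empty Zariski-open subset of $G$ (it contains $\dot w$). Because $G$ is irreducible, the finite intersection $U := \bigcap_{w \in W} U_w$ is non-empty, and any $g \in U$ yields $v := gv_\lambda$ with $W\lambda \subseteq \operatorname{supp}(v)$.

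It remains to verify that $0$ lies in the interior of $\operatorname{conv}(W\lambda)$. Since $\lambda \in \Lambda^{++}$ is regular, its $W$-stabilizer is trivial, $|W\lambda|=|W|$, and $W\lambda$ spans $\Lambda(\mathbb{R})$, so $\operatorname{conv}(W\lambda)$ is full-dimensional. Moreover, every $w\lambda$ is a vertex, separated from the remaining elements of $W\lambda$ by any strictly dominant linear functional (using $\lambda - w'\lambda \in \sum_i \mathbb{Z}_{\geq 0}\alpha_i$, applied after $W$-translation). The centroid $\tfrac{1}{|W|}\sum_{w \in W} w\lambda$ is $W$-invariant in $\Lambda(\mathbb{R})$ and hence equal to $0$; being a strictly positive convex combination of all vertices, it lies in the relative interior, which coincides with the interior by full-dimensionality. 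Therefore $0 \in \operatorname{int}\operatorname{conv}(\operatorname{supp}(v))$, and $[v] \in X^s(\lambda)$.

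The main obstacle I anticipate is conceptual rather than computational: carefully reconciling the sign conventions in Definition \ref{git} with the ``$0$ in interior of weight polytope'' criterion, and ensuring one is applying the Hilbert--Mumford test only to OPS of the acting torus $T$ (not of $G$). Once this translation is in place, the genericity argument producing $g$ and the standard facts about Weyl orbits of regular dominant weights give the conclusion directly.
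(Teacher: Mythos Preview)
Your proof is correct and follows essentially the same architecture as the paper's: both embed $X$ in $\mathbb{P}(V(\lambda))$ via $gB\mapsto [gv_\lambda]$, construct the same nonempty Zariski-open set of points whose weight support contains the full orbit $W\lambda$, and then verify the Hilbert--Mumford criterion (Proposition~\ref{propn3.1}(d)). The only divergence is in the final positivity check: the paper chooses $w'\in W$ with $-w'\dot\sigma\in\mathfrak{t}_+$ and invokes the fact that each fundamental weight lies in $\sum_i\mathbb{Q}_{>0}\alpha_i$ to obtain $\lambda(-w'\dot\sigma)>0$ directly, whereas you recast stability as ``$0$ lies in the interior of $\operatorname{conv}(\operatorname{supp}(v))$'' and observe that the $W$-invariant centroid $\tfrac{1}{|W|}\sum_{w}w\lambda=0$ is a strictly positive convex combination of the (affinely spanning) points $w\lambda$, hence interior.
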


\begin{proof}
Consider the embedding
$$
i_{\lambda}:X\hookrightarrow \mathbb{P}(V(\lambda)),\quad
gB\mapsto [gv_{\lambda}],
$$
where $v_{\lambda}$ is a highest weight vector in $V(\lambda)$. Then,
the line bundle $\mathcal{O}(1)$ over $\mathbb{P}(V(\lambda))$ restricts to
the  line bundle $\mathcal{L}(\lambda)$ on $X$ via
$i_{\lambda}$ (as can be easily seen).

Consider the open subset $U_{\lambda}\subset X$ defined by
$U_{\lambda}=\{gB\in X:gv_{\lambda}$ has a nonzero component in
each of the weight spaces $V(\lambda)_{w\lambda}$ of weight
$w\lambda$, for all $w\in W\}$.

Since $V(\lambda)$ is an irreducible $G$-module, it is easy to see
that $U_{\lambda}$ is nonempty. We claim that
\begin{equation}
U_{\lambda}\subset X^{s}(\lambda).\label{eq4}
\end{equation}

By the Hilbert-Mumford criterion (cf. Proposition \ref{propn3.1} (d)), it
suffices to prove that for any $gB\in U_{\lambda}$, the Mumford index
\begin{equation}
\mu^{\mathcal{L}(\lambda)}(gB,\sigma)>0,\label{eq5}
\end{equation}
for any nonconstant one parameter subgroup $\sigma:\mathbb{G}_{m}\to
T$. Express
$$
gv_{\lambda}=\sum_{\mu\in X(T)}v_{\mu}\,,
$$
as a sum of weight vectors. Let $\dot{\sigma}$ be the derivative of
$\sigma$ considered as an element of $\mathfrak{t}$. Then, by
the identity \eqref{muvalue},
\begin{align}\label{eq6}
\mu^{\mathcal{L}(\lambda)}(gB,\sigma) &=
\max\limits_{\substack{\mu\in X(T):\\ v_{\mu}\neq
    0}}\, \{-\mu(\dot{\sigma})\}\notag\\
&\geq \max\limits_{w\in W}\, \{\lambda(-w\dot{\sigma})\},\,\,\,\text{since}\,\,
gB\in U_{\lambda}.
\end{align}
Choose  $w'\in W$ such that $-w'\dot{\sigma}\in t_{+}$. Since
$\sigma$ is nonconstant, $-w'\dot{\sigma}\neq 0$.
We next claim that
\begin{equation}
\lambda(-w'\dot{\sigma})>0:\label{eq8}
\end{equation}
To prove this, first observe that any fundamental weight
$\omega_{j}$ belongs to $\oplus^{\ell}_{i=1}\mathbb{Q}_{>0}\alpha_{i}$. (One could
check this case by case for any simple group from \cite[Planche
  I-IX]{B}. Alternatively, one can give a uniform proof as
well.) Thus, by the decomposition \eqref{eq7}, since
$-\omega'\dot{\sigma}\neq 0\in \mathfrak{t}_{+}$, we get
\eqref{eq8}. In particular, by \eqref{eq6},
$\mu^{\mathcal{L}(\lambda)}(gB,\sigma)>0$, proving \eqref{eq5}.
This proves the lemma.
\end{proof}

\begin{proposition}\label{prop3.2}
For $\lambda\in \Lambda^{++}, X^s(\lambda)\neq X^{ss}(\lambda)$ if and only if
there exists $w\in W$ and $x_j$ such that $\lambda(wx_j)=0, $ where $x_i\in \mathfrak{t}$ is defined by
\eqref{eq101}.
\end{proposition}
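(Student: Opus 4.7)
The plan is to deduce both implications from the Hilbert-Mumford criterion (Proposition \ref{propn3.1}(d)) applied to $\sigma$-fixed structure on $X$, for a 1-parameter subgroup $\sigma$ of $T$. The pivotal observation is that every connected component of the fixed locus $X^\sigma$ has the form $C=L_\sigma\cdot \tilde wB$ for some $w\in W$ (where $L_\sigma=Z_G(\sigma)$ is the centralizer Levi), and the central subtorus $Z(L_\sigma)^0\subset T$ acts trivially on $C$ because $Z(L_\sigma)^0\subset T\subset \tilde wB\tilde w^{-1}$. Consequently the restriction $\mathcal{L}(\lambda)|_C$ carries a single $Z(L_\sigma)^0$-character, namely $-w\lambda|_{\Lie(Z(L_\sigma)^0)}$ (the weight of the fiber at $\tilde wB$).

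For the ``only if'' direction, start with $x\in X^{ss}(\lambda)\setminus X^s(\lambda)$; Hilbert-Mumford produces a non-trivial 1-PS $\sigma$ of $T$ with $\mu^{\mathcal{L}(\lambda)}(x,\sigma)=0$. The limit $x_0=\lim_{t\to 0}\sigma(t)x$ remains in $X^{ss}(\lambda)$ by Proposition \ref{propn3.1}(b) and lies in a component $C=L_\sigma\cdot \tilde wB$ as above. For every 1-PS $\tau$ of $Z(L_\sigma)^0$, the Mumford index at $x_0$ equals $-w\lambda(\dot\tau)$; since the cocharacter lattice of $Z(L_\sigma)^0$ contains $\pm$ pairs, semistability forces $w\lambda|_{\Lie(Z(L_\sigma)^0)}=0$. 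Writing $L_\sigma=w_0L_Jw_0^{-1}$ for a proper standard Levi (so $J\subsetneq[\ell]$, ensured by $\sigma$ being non-trivial), one has $\Lie(Z(L_\sigma)^0)=w_0\cdot\mathrm{span}\{x_j:j\notin J\}$. Picking any $j\notin J$ and setting $w'=w^{-1}w_0$ gives $\lambda(w'x_j)=(w\lambda)(w_0x_j)=0$.

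For the ``if'' direction, given $\lambda(wx_i)=0$, take $\sigma$ with $\dot\sigma=wx_i$, so $L_\sigma=wL_iw^{-1}$ for the standard maximal Levi $L_i$ and $\Lie(Z(L_\sigma)^0)=\mathbb{R}\cdot wx_i$. The fixed component $C=L_\sigma\cdot eB$ is the full flag variety $L_\sigma/(L_\sigma\cap B)$: the Borel property of $L_\sigma\cap B$ follows because $wR_{L_i}\cap R^+$ is a positive system for $R_{L_\sigma}=wR_{L_i}$. The restriction $\lambda|_{L_\sigma'}$ is strictly dominant for the derived group $L_\sigma'$ because $\lambda(\beta^\vee)\geq 1$ for every positive coroot $\beta^\vee$ of $G$ (positive coroots being non-negative integer combinations of simple coroots). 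Applying Lemma \ref{prop3.4} to $L_\sigma'$ yields a $(T\cap L_\sigma')$-stable point $x\in C$. For any 1-PS $\tau$ of $T$, decompose $\dot\tau=\dot\tau_1+\dot\tau_2$ with $\dot\tau_1\in\Lie(Z(L_\sigma)^0)$ and $\dot\tau_2\in\Lie(T\cap L_\sigma')$; since $\tau_1$ acts trivially on $C$ and contributes weight $-\lambda(\dot\tau_1)=0$ to $\mathcal{L}(\lambda)|_C$, one gets $\mu^{\mathcal{L}(\lambda)}(x,\tau)=\mu^{\mathcal{L}(\lambda)}(x,\tau_2)$, which is strictly positive when $\dot\tau_2\neq 0$ and zero otherwise. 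Hence $x\in X^{ss}(\lambda)\setminus X^s(\lambda)$.

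The most delicate step is the Mumford-index additivity in the decomposition $\dot\tau=\dot\tau_1+\dot\tau_2$, which depends on $\tau_1$ acting trivially on $C$ so that $\lim_{t\to 0}\tau(t)x=\lim_{t\to 0}\tau_2(t)x$, and on the $Z(L_\sigma)^0$-weight on $\mathcal{L}(\lambda)|_C$ being constant across the whole component. This trivial-action property of $Z(L_\sigma)^0$ on its own fixed components is what drives both directions and, via the explicit identification $\Lie(Z(L_\sigma)^0)=w_0\cdot\mathrm{span}\{x_j:j\notin J\}$, matches the condition ``$X^s(\lambda)\neq X^{ss}(\lambda)$'' with the specific hyperplane family $H_{w,j}$.
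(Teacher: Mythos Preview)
Your argument is correct and follows the same Hilbert--Mumford\,/\,fixed-component framework as the paper, but the two proofs diverge in how the converse is executed. In the paper, the ``if'' direction takes the dominant one-parameter subgroup $\delta_j=\Exp(zx_j)$, works in the standard Levi $L_j$, and cites two external lemmas from [Ku]: Lemma~3.1 (the $L_j$-irreducible $V_{L_j}(w^{-1}\lambda)$ has nonzero zero weight space) and Lemma~3.4 (this gives a semistable point $gw^{-1}B$ in the relevant fixed component), after which $\mu^{\mathcal{L}(\lambda)}(gw^{-1}B,\delta_j)=0$ is read off directly. Your route instead conjugates, taking $\dot\sigma=wx_i$ so that the component $C=L_\sigma\cdot eB$ sits over the base point, and then invokes the paper's own Lemma~\ref{prop3.4} for the derived Levi $L_\sigma'$ to produce a $(T\cap L_\sigma')$-stable point; the hypothesis $\lambda(wx_i)=0$ kills the $Z(L_\sigma)^0$-character on $\mathcal{L}(\lambda)|_C$, so the $T$-action factors through $T/Z(L_\sigma)^0$, yielding $T$-semistability together with $\mu^{\mathcal{L}(\lambda)}(x,\sigma)=0$. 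The payoff of your version is self-containment: you avoid the two black-box citations to [Ku] at the cost of checking that the proof of Lemma~\ref{prop3.4} carries over to $L_\sigma'$ (which it does, since the simple coroots of $L_\sigma$ with respect to $L_\sigma\cap B$ are positive coroots of $G$, so the fundamental coweights of $L_\sigma'$---being positive rational combinations of those simple coroots---pair strictly positively with $\lambda$). One small point worth making explicit: the splitting $\dot\tau=\dot\tau_1+\dot\tau_2$ need not lift to a product of genuine one-parameter subgroups, but since $Z(L_\sigma)^0$ acts trivially on both $C$ and $\mathcal{L}(\lambda)|_C$, the $T$-Mumford index on $C$ factors through the isogeny $T\cap L_\sigma'\to T/Z(L_\sigma)^0$, which is enough. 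The forward directions are essentially identical: the paper normalizes $\delta$ to be dominant and quotes [Ku, Lemma~3.4], whereas you conjugate $L_\sigma$ to a standard Levi at the end; the content is the same.
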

\begin{proof} Assume first that $X^s(\lambda)\neq X^{ss}(\lambda)$. Take $x\in
 X^{ss}(\lambda)\setminus X^s(\lambda)$. Then, by the
Mumford criterion Proposition \ref{propn3.1} (d), there exists a non-constant one
parameter subgroup $\delta$ in $T$ such that $\mu^{\mathcal{L}(\lambda)}(x, \delta)
=0$. Since both of  $X^s(\lambda)$ and $X^{ss}(\lambda)$ are $N(T)$-stable under
 the left multiplication on $X$ by $N(T)$ (by loc. cit.), we
can assume that $\delta$ is $G$-dominant, i.e.,  the derivative $\dot{\delta}\in \mathfrak{t}_+$.
Thus, by Proposition \ref{propn3.1} (b),  $x_o:=\lim_{t\to 0}\delta(t)x \in
X^{ss}(\lambda)$, since $x$ is semistable. Let $G^\delta$ be the fixed point subgroup of $G$ under
the conjugation action by $\delta$. Then,  $G^\delta$ is a (connected) Levi subgroup of $G$. Let
$W^{G^\delta}$ be the set of minimal length coset  representatives in the cosets $W/ W_{G^\delta}$,
where $ W_{G^\delta} \subset W$ is the Weyl group of $G^\delta$. The fixed point set
of $X$ under the left multiplication by $\delta$ is given by $X^\delta= \sqcup_{v\in W^{G^\delta}}\,
 G^\delta v^{-1}B/B$. Let $w\in W^{G^\delta}$ be such that  $x_o\in G^\delta w^{-1}B/B$. Thus, by [Ku, Lemma 3.4],
 \beqn \label{eqn3.2} w^{-1}\lambda\in \sum_{\alpha_i\in \Delta(G^\delta)}\,\bz
 \alpha_i,
 \eeqn
where $ \Delta(G^\delta)\subset  \Delta $ is the set of simple roots of $G^\delta$.
 Since $\delta$ is non-constant, $G^\delta$ is a proper Levi subgroup.
 Take $\alpha_j\in \Delta\setminus \Delta(G^\delta)$. Then, by \eqref{eqn3.2},
 $\lambda (wx_j)=0.$

 Conversely, assume that
 \beqn\label{eqn3.3}\lambda(wx_j)=0,
 \eeqn
 for some $w\in W$ and some $x_j$.
For any $1\leq i\leq \ell$, let $L_i$ be the Levi subgroup containing $T$
such that $\Delta(L_i)=\Delta \setminus \{\alpha_i\}$. By the assumption
\eqref{eqn3.3}, $w^{-1}\lambda \in \sum_{\alpha_i\in  \Delta(L_j)}\,\bz
\alpha_i.$ Moreover, we can choose $w\in W^{L_j}$ and hence  $w^{-1}\lambda$ is
a dominant weight for $L_j$. In particular,  $v_{w^{-1}\lambda}$ is a highest weight vector
for $L_j$, where   $v_{w^{-1}\lambda}$ is a nonzero vector of (extremal) weight
  $w^{-1}\lambda$ in $V(\lambda)$. (To prove this, observe that $|  w^{-1}\lambda+\alpha_i|>
 |\lambda| $ for any $\alpha_i\in \Delta(L_j)$, and hence  $ w^{-1}\lambda+\alpha_i$
can not be a weight of $V(\lambda)$.) Thus, the $L_j$-submodule $V_{L_j}(w^{-1}\lambda)$ of
$V(\lambda)$ generated by $v_{w^{-1}\lambda}$ is an irreducible $L_j$-module.   By [Ku, Lemma 3.1],
applied to the  $L_j$-module $V_{L_j}(w^{-1}\lambda)$, we get that  $V_{L_j}(w^{-1}\lambda)$
 contains the zero weight space.
Hence, by [Ku, Lemma 3.4], there exists a $g\in L_j$ such that $gw^{-1}B\in
X^{ss}(\lambda)$. Define the one parameter subgroup  $\delta_j := \Exp(zx_j)$.
Then, $\mu^{\mathcal{L}(\lambda)}(gw^{-1}B, \delta_j)=
\mu^{\mathcal{L}(\lambda)}(w^{-1}B, \delta_j)$, since $g$ fixes $\delta_j$.
But, $\mu^{\mathcal{L}(\lambda)}(w^{-1}B, \delta_j)=0$, by \eqref{muvalue} (due to the assumption \eqref{eqn3.3}). Thus,
$gw^{-1}B \notin X^s(\lambda)$ by Proposition \ref{propn3.1} (d).
\end{proof}

For any $w\in W$ and $1\leq i \leq \ell$, define the hyperplane
$$H_{w,i}:=\{\lambda \in \Lambda(\mathbb{R}): \lambda(wx_i)=0\}.$$
Decompose into connected components:
$$ \Lambda^{++}(\mathbb{R})\setminus \bigl(\cup_{w\in W, 1\leq i\leq \ell}\,H_{w,i}\bigr)= \sqcup_{k=1}^N\, C_k.$$
The following corollary follows immediately from Proposition \ref{prop3.2} and [DH, Theorems 3.3.2 and 3.4.2].
\begin{corollary} \label{chamber} With the notation as above, $\{C_1, \dots, C_N\}$ are precisely the GIT classes of maximal dimension
(equal to dim $\mathfrak{t}$).
\end{corollary}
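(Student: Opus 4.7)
The plan is to match the connected components $C_{1},\dots,C_{N}$ of $\Lambda^{++}(\mathbb{R})\setminus\bigl(\cup_{w,i}H_{w,i}\bigr)$ with the GIT classes of maximal dimension, by using Proposition \ref{prop3.2} to identify the hyperplane arrangement with the locus where strict stability fails, and then invoking the general structure theory of Dolgachev--Hu.

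First, I would extract from [DH, Theorems 3.3.2 and 3.4.2] the following two facts. (i) Each GIT equivalence class in $\Lambda^{++}(\mathbb{R})$ is the relative interior of a rational polyhedral cone, so in particular it is connected, and there are only finitely many such classes. (ii) A GIT class is of maximal dimension (equal to $\dim \Lambda(\mathbb{R})$) if and only if $X^{s}(\lambda)=X^{ss}(\lambda)$ for some (equivalently, every) $\lambda$ in that class; consequently the union of the lower-dimensional (wall) GIT classes is exactly the set $\{\lambda\in\Lambda^{++}(\mathbb{R}):X^{s}(\lambda)\neq X^{ss}(\lambda)\}$.

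Combining (ii) with Proposition \ref{prop3.2}, the union of the non-maximal GIT classes equals $\Lambda^{++}(\mathbb{R})\cap\bigl(\cup_{w\in W,\,1\leq i\leq\ell}H_{w,i}\bigr)$, and hence the union of the maximal-dimensional GIT classes is exactly the open set $\Lambda^{++}(\mathbb{R})\setminus\bigl(\cup_{w,i}H_{w,i}\bigr)$. Since by (i) each maximal-dimensional GIT class is open, connected, and they are pairwise disjoint, they must be precisely the connected components of this open subset. This identifies $\{C_{1},\dots,C_{N}\}$ with the GIT classes of maximal dimension, which is what the corollary asserts.

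The main obstacle, such as it is, is to make sure that the equivalence in (ii) is really what is established by the cited Dolgachev--Hu theorems in our $T$-acting-on-$X=G/B$ setting. This should be routine: the ample cone $\Lambda^{++}(\mathbb{R})$ is nonempty and rational polyhedral, $X^{ss}(\lambda)\neq\emptyset$ for every $\lambda\in\Lambda^{++}(\mathbb{R})$ (as observed in Section \ref{sec2}), and Lemma \ref{prop3.4} ensures $X^{s}(\lambda)$ is nonempty whenever $\lambda\in\Lambda^{++}$, so the DH hypotheses are in force and (ii) applies directly.
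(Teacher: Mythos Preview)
Your proposal is correct and follows the same route as the paper, which simply records that the corollary ``follows immediately from Proposition \ref{prop3.2} and [DH, Theorems 3.3.2 and 3.4.2]''; you have just unpacked that one-line justification. The only small point worth noting is that Proposition \ref{prop3.2} is stated for integral $\lambda\in\Lambda^{++}$, so when you apply it to all of $\Lambda^{++}(\mathbb{R})$ you are implicitly using that the GIT classes are rational polyhedral (hence determined by their rational points) and that the hyperplane conditions $\lambda(wx_i)=0$ are homogeneous linear, which makes the passage from integral to real $\lambda$ immediate.
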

\begin{lemma}\label{lem3.3}
For any GIT class $C_{k}$ (of maximal dimension) and any $\lambda\in
\Gamma$, the line bundle $\mathcal{L}(\lambda)$ descends as a
line bundle on the GIT quotient $X_T(C_k)$. We denote this
line bundle by $\widehat{\mathcal{L}}_{C_{k}}(\lambda)$.
\end{lemma}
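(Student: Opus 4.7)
The plan is to reduce to the dominant case covered by Theorem \ref{thm3.1} via a tensor-power trick. By the defining property of $\Gamma$, whenever $\mu \in \Gamma \cap \Lambda^{++}$ the line bundle $\mathcal{L}(\mu)$ descends to the GIT quotient $X_T(\mu)$; and if in addition $\mu$ belongs to the chamber $C_k$, then $X_T(\mu) = X_T(C_k)$ since the GIT quotient depends only on the GIT class. The task is therefore to express a general $\lambda \in \Gamma$ as a formal difference of two such dominant weights, both lying inside $C_k$, and then appeal to the fact that descent is a group-theoretic condition on line bundles.

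First I would fix a reference weight $\lambda_0 \in \Gamma \cap C_k$. Such a $\lambda_0$ exists because $C_k$ is a top-dimensional open cone inside $\Lambda^{++}(\mathbb{R})$, while $\Gamma$ is a full-rank sublattice of $Q = \Lambda$, so $C_k \cap \Gamma \neq \emptyset$; any such lattice point automatically lies in $\Lambda^{++}$ since its pairing with each $\alpha_i^{\vee}$ is a strictly positive integer. Theorem \ref{thm3.1} then guarantees that $\mathcal{L}(\lambda_0)$ descends to $X_T(\lambda_0) = X_T(C_k)$.

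Next, for the given $\lambda \in \Gamma$, I would choose an integer $n \gg 0$ so that $\lambda + n\lambda_0 \in C_k$. This is possible because $C_k$ is open and contains $\lambda_0$, so $(1/n)\lambda + \lambda_0 \in C_k$ for $n$ large, and since $C_k$ is a cone we get $\lambda + n\lambda_0 \in C_k$. Then $\lambda + n\lambda_0 \in \Gamma \cap \Lambda^{++}$, so Theorem \ref{thm3.1} applies once more: $\mathcal{L}(\lambda + n\lambda_0)$ descends to $X_T(\lambda + n\lambda_0) = X_T(C_k)$.

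Finally, I would invoke the standard fact (an immediate consequence of Kempf's descent lemma, whose criterion involves only the triviality of stabilizer actions on fibers at closed orbits) that the set of $T$-equivariant line bundles on $X^{ss}(C_k)$ that descend to $X_T(C_k)$ is closed under tensor products and duals. Since
\[
\mathcal{L}(\lambda) \cong \mathcal{L}(\lambda + n\lambda_0) \otimes \mathcal{L}(\lambda_0)^{\otimes(-n)}
\]
as $T$-equivariant line bundles on $X$, the restriction of $\mathcal{L}(\lambda)$ to $X^{ss}(C_k)$ descends to the desired line bundle $\widehat{\mathcal{L}}_{C_k}(\lambda)$ on $X_T(C_k)$. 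The only non-formal point in the argument is the first step, namely securing a lattice point of $\Gamma$ inside the open chamber $C_k$; but this is handled by the finite-index observation above and is the mildest of obstacles.
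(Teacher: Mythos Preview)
Your proposal is correct and follows essentially the same strategy as the paper: both arguments reduce to Theorem~\ref{thm3.1} by writing an arbitrary $\lambda\in\Gamma$ as a difference of two elements of $\Gamma\cap C_k$ and then using that descent is preserved under tensor products and duals. The only cosmetic difference is that the paper proves the lattice identity $\mathbb{Z}(\Gamma\cap C_k)=\Gamma$ via a ball argument around a deep interior point (so that $\gamma$ and each $\gamma+\gamma_i$ lie in $C_k$), whereas you fix one anchor $\lambda_0\in\Gamma\cap C_k$ and use the cone property to absorb $\lambda$ into $C_k$ after adding a large multiple of $\lambda_0$; both devices accomplish the same thing.
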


\begin{proof}
By Theorem \ref{thm3.1}, for any $\lambda\in \Lambda^{++}\cap
\Gamma$, the line bundle $\mathcal{L}(\lambda)$ on $X$ descends
to a line bundle on $X_T(\lambda)$. Hence, for any
$\lambda\in \Gamma\cap C_{k}$, the line bundle
$\mathcal{L}(\lambda)$ descends to a line bundle
$\widehat{\mathcal{L}}_{C_{k}}(\lambda)$ on $X_T(C_k)$.

Let $\mathbb{Z}(\Gamma\cap C_{k})$ denote the subgroup of $\Gamma$
generated by the semigroup $\Gamma\cap C_{k}$. For any
$\lambda=\lambda_{1}-\lambda_{2}\in \mathbb{Z}(\Gamma\cap C_{k})$
(for $\lambda_{1}$, $\lambda_{2}\in \Gamma\cap C_{k}$), define
$$
\widehat{\mathcal{L}}_{C_{k}}(\lambda)=\widehat{\mathcal{L}}_{C_{k}}(\lambda_{1})\otimes
\widehat{\mathcal{L}}_{C_{k}}(\lambda_{2})^{*}.
$$

We now show that $\widehat{\mathcal{L}}_{C_{k}}(\lambda)$ is well
defined, i.e., it does not depend upon the choice of the decomposition
$\lambda=\lambda_{1}-\lambda_{2}$ as above. Take another decomposition
$\lambda=\lambda'_{1}-\lambda'_{2}$, with $\lambda'_{1}$,
$\lambda'_{2}\in \Gamma\cap C_{k}$. Thus,
$\lambda_{1}+\lambda'_{2}=\lambda'_{1}+\lambda_{2}\in \Gamma \cap
C_{k}$ (since $\Gamma\cap C_{k}$ is a semigroup). In
particular,
$\widehat{\mathcal{L}}_{\mathbb{C}_{k}}(\lambda_{1}+\lambda'_{2})\simeq
\widehat{\mathcal{L}}_{C_{k}}(\lambda'_{1}+\lambda_{2})$.

But, from the uniqueness of $\widehat{L}_{C_{k}}(\lambda)$
(cf. \cite[\S\ 3]{T}), we have
$\widehat{\mathcal{L}}_{C_{k}}(\lambda_{1}+\lambda'_{2})\simeq
\widehat{\mathcal{L}}_{C_{k}}(\lambda'_{1})\otimes
\widehat{\mathcal{L}}_{C_{k}}(\lambda_{2})$. This proves the assertion
that $\mathcal{L}_{C_{k}}(\lambda)$ is well defined.

Observe that, by definition,  $C_{k}$ is  an open convex cone in
 $\Lambda(\mathbb{R})$.
We next claim that
\begin{equation}\label{eq102}
\mathbb{Z}(\Gamma\cap C_{k})=\Gamma.
\end{equation}

Take a $\mathbb{Z}$-basis $\{\gamma_{1},\ldots,\gamma_{\ell}\}$ of $\Gamma$ and
let $d:=\max_{i}||\gamma_{i}||$, with respect to a norm $||\cdot||$ on
$\Lambda(\mathbb{R})$. Take a `large enough' $\gamma\in \Gamma\cap
{C}_{k}$ such that the closed ball $B(\gamma,d)$ of radius
$d$ centered at $\gamma$ is contained in ${C}_{k}$. Then,
for any $1\leq i\leq \ell$, $\gamma+\gamma_{i}\in B(\gamma,d)$ and hence $\gamma$,
$\gamma+\gamma_{i}\in \Gamma\cap C_{k}$ for any $i$. Thus, each $\gamma_{i}\in
\mathbb{Z}(\Gamma\cap C_{k})$ and hence
$\Gamma=\mathbb{Z}(\Gamma\cap C_{k})$, proving the assertion \eqref{eq102}.
Thus, the lemma is proved.
\end{proof}

\section{The main result and its proof}\label{sec4}

Let $\mu_{0}:\Lambda^{+}\to\mathbb{Z}_{+}$ be the function:
$\mu_{0}=\dim V(\lambda)_{0}$, where $V(\lambda)_{0}$ is the
$0$-weight space of $V(\lambda)$.
Following the notation from Sections \ref{sec2} and \ref{sec3}, the following is  our main result.

\begin{theorem}\label{thm3.2}
Let $G$ be a connected, adjoint, simple algebraic group. Let
$\overline{\mu}=\mu+\Gamma$ be a coset of $\Gamma$ in $Q$, where
$\Gamma$ is as in Theorem \ref{thm3.1}.
 Then, for any GIT class $C_{k}$ (of maximal dimension), $1\leq k \leq N$,
 there exists a polynomial
$f_{\overline{\mu},k}:\Lambda(\mathbb{R})\to\mathbb{R}$ with
rational coefficients of degree $\leq \dim_{\mathbb{C}} X-\ell$, such
that
\begin{equation}\label{eq2}
f_{\overline{\mu},k}(\lambda)=\mu_{0}(\lambda),\quad\text{for
  all}\quad \lambda\in \bar{C}_{k}\cap \overline{\mu},
\end{equation}
where  $\overline{C}_{k}$ is the closure of  $C_{k}$ inside $\Lambda(\mathbb{R})$.
 Further, $f_{\Gamma,k}$ has
constant term 1.
\end{theorem}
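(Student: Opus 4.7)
The plan is to realize $\mu_0(\lambda)$ cohomologically, transport that cohomology to a GIT quotient of $X$, and then read off its polynomial dependence on $\lambda$ via a Riemann--Roch theorem for singular varieties.

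First, by the Borel--Weil theorem, for any $\lambda\in \Lambda^+$ one has $V(\lambda)\cong H^0(X,\mathcal{L}(\lambda))$ as a $G$-module and $H^i(X,\mathcal{L}(\lambda))=0$ for $i>0$; taking $T$-invariants gives $\mu_0(\lambda)=\dim H^0(X,\mathcal{L}(\lambda))^T$. Fix a base point $\lambda_\mu$ of the coset $\overline{\mu}=\mu+\Gamma$, and let $\pi\colon X^{ss}(C_k)\to X_T(C_k)$ be the GIT quotient map. Set
\[
\mathcal{F}_{\overline{\mu},k}:=\bigl(\pi_*(\mathcal{L}(\lambda_\mu)_{|X^{ss}(C_k)})\bigr)^T,
\]
a reflexive coherent sheaf on $X_T(C_k)$ depending only on the coset. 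For any $\lambda\in \overline{\mu}\cap \overline{C}_k$, the difference $\lambda-\lambda_\mu\in \Gamma$, so by Lemma \ref{lem3.3} the line bundle $\mathcal{L}(\lambda-\lambda_\mu)$ descends to $\widehat{\mathcal{L}}_{C_k}(\lambda-\lambda_\mu)$ on $X_T(C_k)$. The projection formula together with descent yields
\[
H^i\bigl(X_T(C_k),\mathcal{F}_{\overline{\mu},k}\otimes \widehat{\mathcal{L}}_{C_k}(\lambda-\lambda_\mu)\bigr) \cong H^i(X^{ss}(C_k),\mathcal{L}(\lambda))^T,
\]
and using that the unstable complement $X\setminus X^{ss}(C_k)$ contributes nothing to the zero weight (Hilbert--Mumford combined with the Borel--Weil--Bott vanishing on $X$), one obtains
\[
\mu_0(\lambda)=\chi\bigl(X_T(C_k),\mathcal{F}_{\overline{\mu},k}\otimes \widehat{\mathcal{L}}_{C_k}(\lambda-\lambda_\mu)\bigr) \quad \text{for every}\ \lambda\in \overline{\mu}\cap \overline{C}_k.
\]

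Second, I apply the singular Riemann--Roch theorem of Baum--Fulton--MacPherson to the projective variety $X_T(C_k)$, obtaining
\[
\chi\bigl(X_T(C_k),\mathcal{F}_{\overline{\mu},k}\otimes \widehat{\mathcal{L}}_{C_k}(\gamma)\bigr)=\int_{X_T(C_k)} \ch\bigl(\widehat{\mathcal{L}}_{C_k}(\gamma)\bigr)\cap \tau(\mathcal{F}_{\overline{\mu},k}),
\]
where $\tau$ is the homological Todd class. Because $\ch\bigl(\widehat{\mathcal{L}}_{C_k}(\gamma)\bigr)=\exp\bigl(c_1\bigl(\widehat{\mathcal{L}}_{C_k}(\gamma)\bigr)\bigr)$ and $\gamma\mapsto c_1\bigl(\widehat{\mathcal{L}}_{C_k}(\gamma)\bigr)$ extends $\mathbb{R}$-linearly to $\Gamma\otimes\mathbb{R}$, the integral is a polynomial in the coordinates of $\gamma$ (hence of $\lambda$) of degree at most $\dim_{\mathbb{C}} X_T(C_k)=\dim_{\mathbb{C}} X-\ell$, with rational coefficients. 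Since $\Gamma$ is of full rank in $\Lambda$, this polynomial defined on $\lambda_\mu+\Gamma(\mathbb{R})$ extends uniquely to a polynomial $f_{\overline{\mu},k}$ on $\Lambda(\mathbb{R})$ of the same degree.

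Third, for the constant-term assertion, take $\overline{\mu}=\Gamma$ and $\lambda_\mu=0$; then $\mathcal{F}_{\Gamma,k}=(\pi_*\mathcal{O}_{X^{ss}(C_k)})^T=\mathcal{O}_{X_T(C_k)}$, so $f_{\Gamma,k}(0)=\chi(X_T(C_k),\mathcal{O})$. Since $X$ is a smooth rational projective variety, its torus GIT quotient $X_T(C_k)$ has rational singularities by Boutot's theorem and is unirational, whence $H^i(X_T(C_k),\mathcal{O})=0$ for $i>0$ and $H^0(X_T(C_k),\mathcal{O})=\mathbb{C}$, giving $\chi(X_T(C_k),\mathcal{O})=1$.

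The step I expect to be the main obstacle is the identification of $\mu_0(\lambda)$ with the Euler characteristic on the fixed quotient $X_T(C_k)$ for $\lambda$ on the \emph{walls} of $C_k$: inside $C_k$ the line bundle $\widehat{\mathcal{L}}_{C_k}(\lambda)$ is ample and everything is standard, but for $\lambda\in\overline{C}_k\setminus C_k$ one has $X^{ss}(\lambda)\supsetneq X^{ss}(C_k)$ and the descended sheaf is only semi-ample. One has to verify that restricting from $X^{ss}(\lambda)$ down to the smaller $X^{ss}(C_k)$ does not change the $T$-invariant $H^0$, and that the higher cohomology of $\mathcal{F}_{\overline{\mu},k}\otimes \widehat{\mathcal{L}}_{C_k}(\lambda-\lambda_\mu)$ either vanishes or contributes trivially to the Euler characteristic on the wall, so that the polynomial produced by Riemann--Roch genuinely computes $\mu_0(\lambda)$ on all of $\overline{C}_k\cap\overline{\mu}$.
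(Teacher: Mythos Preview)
Your strategy is the same as the paper's: Borel--Weil plus projection formula to land on the GIT quotient, then singular Riemann--Roch. The ingredient you correctly flag as the obstacle---that the Euler characteristic on the fixed quotient $X_T(C_k)$ really equals $\mu_0(\lambda)$, both in the interior and on the walls---is precisely what the paper imports from Teleman \cite{T}. For $\lambda\in C_k$ the paper invokes \cite[Remark 3.3(i)]{T} to get $H^p\bigl(X_T(C_k),\pi_*^T\mathcal{L}(\lambda)\bigr)=0$ for $p>0$ and the $H^0$ identification; for $\lambda\in\overline{C}_k\setminus C_k$ it factors through the partial flag variety $G/P_\lambda$ on which $\mathcal{L}(\lambda)$ is ample and applies \cite[Theorem 3.2.a and Remarks 3.3]{T} to the adapted stratification. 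Your phrase ``Hilbert--Mumford combined with Borel--Weil--Bott'' does not by itself give the higher-cohomology vanishing on the quotient nor the wall case; Teleman's quantization-commutes-with-reduction is the actual content here.

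One genuine difference: for the constant term you compute $\chi(X_T(C_k),\mathcal{O})=1$ directly via Boutot (rational singularities) and unirationality, whereas the paper, having already established \eqref{eq2} on all of $\overline{C}_k$, simply evaluates at $\lambda=0\in\overline{C}_k$ and reads off $\mu_0(0)=1$. Your route is independent of the wall argument and is a perfectly good alternative.
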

\begin{proof}
By the Borel-Weil theorem, for any $\lambda\in \Lambda^+$,
\begin{equation}\label{eq103}
\mu_{0}(\lambda)=\dim
\left({H}^0(X,\mathcal{L}(\lambda))^{T}\right),
\end{equation}
since
$$
\dim (V(\lambda)_{0})=\dim \left((V(\lambda)^{*})_{0}\right).
$$
Moreover, by the Borel-Weil-Bott theorem, for $\lambda\in
\Lambda^{+}$,
\begin{equation}
H^{p}(X,\mathcal{L}(\lambda))=0,\quad\text{for all}\quad p>0.\label{eq10}
\end{equation}

We first prove the theorem for $ \lambda\in {C}_{k}\cap \overline{\mu}$:

Take $\lambda\in C_{k}\cap \overline{\mu}$. Let
$\pi:X^{ss}(C_{k})\to X_T(C_k)$ be the standard
quotient map. For any $T$-equivariant sheaf $\mathcal{S}$ on
$X^{ss}(C_{k})$, define the $T$-invariant direct image sheaf
$\pi^{T}_{*}(\mathcal{S})$ as the sheaf on $X_T(C_k)$
with sections $U\mapsto
\Gamma({\pi}^{-1}(U),\mathcal{S})^{T}$. Then, by Lemma \ref{lem3.3},
and the projection formula for $\pi^{T}_{*}$,
\begin{equation}
\pi^{T}_{*}(\mathcal{L}(\lambda))\simeq
\pi^{T}_{*}(\mathcal{L}(\mu))\otimes
\widehat{\mathcal{L}}_{C_{k}}(\lambda-\mu). \label{eq11}
\end{equation}
By \cite[Remark 3.3(i)]{T} and \eqref{eq10}, we get
\begin{align}
H^{p}\left(X_T(C_k), \
\pi^{T}_{*}(\mathcal{L}(\lambda))\right) &\simeq
H^{0}(X,\mathcal{L}(\lambda))^{T},\quad\text{for}\quad
p=0 \notag\\
 &= 0,\quad\text{otherwise}.\label{eq12}
\end{align}
Thus, for $\lambda\in C_{k}\cap \overline{\mu}$, by \eqref{eq103},
\begin{equation}
\mu_{0}(\lambda)=\chi\left(X_T(C_k),
\ \pi^{T}_{*}(\mathcal{L}(\lambda))\right),\label{eq13}
\end{equation}
where for any projective variety $Y$ and a coherent sheaf
$\mathcal{S}$ on $Y$, we define the Euler-Poincar\'e characteristic
$$
\chi(Y,\mathcal{S}):=\sum_{i\geq 0}(-1)^{i}\dim H^{i}(Y,\mathcal{S}).
$$
Now, take a basis (as a $\mathbb{Z}$-module) $\{\gamma_{1},\ldots,\gamma_{\ell}\}$
of the lattice $\Gamma\subset \Lambda(\mathbb{R})$.
Then, for any $\lambda=\mu+\sum^{\ell}_{i=1}a_{i}\gamma_{i}\in
\bar{\mu}$, with $a_{i}\in \mathbb{Z}$, we have by \eqref{eq11},
\begin{equation}
\pi^{T}_{*}(\mathcal{L}(\lambda))\simeq
\pi^{T}_{*}(\mathcal{L}(\mu))\otimes
\widehat{\mathcal{L}}_{C_{k}}(\sum a_{i}\gamma_{i}).\label{eq14}
\end{equation}
Thus, by the Riemann-Roch theorem for singular varieties
(cf. \cite[Theorem 18.3]{F}) applied to the sheaf
$\pi^{T}_{*}(\mathcal{L}(\lambda))$, we get for any $\lambda
=\mu+\sum a_i\gamma_i\in
\bar{\mu}$,
\begin{align}
& \chi
\left(X_T(C_k),\pi^{T}_{*}(\mathcal{L}(\lambda))\right)=\notag\\
&\qquad \sum_{n\geq
  0}\int\limits_{X_T(C_k)}\frac{(a_{1}c_{1}(\gamma_{1})+\cdots+a_{\ell}c_{1}(\gamma_{\ell}))^{n}}{n!}\cap
\tau \left(\pi^{T}_{*}(\mathcal{L}(\mu))\right), \label{eq15}
\end{align}
where $\tau(\pi^{T}_{*}(\mathcal{L}(\mu)))$ is a certain class in
the chow group
$A_{*}(X_T(C_k))\otimes_{\mathbb{Z}}\mathbb{Q}$ and
$c_{1}(\gamma_{i})$ is the first Chern class of the line bundle
$\widehat{\mathcal{L}}_{C_{k}}(\gamma_{i})$.
Combining \eqref{eq13} and \eqref{eq15}, we get that for any
$\lambda\in C_{k}\cap \bar{\mu}$, $\mu_{0}(\lambda)$
is a polynomial $f_{\bar{\mu}, k}$ with rational coefficients  in the variables $\{a_{i}\}:\lambda=\mu
+\sum\limits^{\ell}_{i=1}a_{i}\gamma_{i}$.

Since $X^{s}(C_{k})\neq \emptyset$ by Lemma
\ref{prop3.4}, $\dim \,(X_T(C_k))=\dim X-\ell$. Thus, deg
 $f_{\bar{\mu}, k}\leq\dim X-\ell. $
This proves the theorem for $\lambda\in C_{k}\cap
\overline{\mu}$.

 We now come to the proof of the theorem for any $\lambda \in
 \bar{C}_k\cap\bar{\mu}$:

 Let $P=P_\lambda\supset B$ be the unique parabolic subgroup such that
 the line bundle $\mathcal{L}(\lambda)$ descends as an ample line bundle
 (denoted $\mathcal{L}^P(\lambda)$) on $X^P:=G/P$ via the standard
 projection $q:G/B\to G/P$. Fix $\mu\in C_k\cap \Lambda$. By [T, $\S$1.2],
 applied to $q:G/B\to G/P$, we get that $q^*(\mathcal{L}^P(\lambda))$ is adapted
 to the stratification on $X$ induced from $q^*(\mathcal{L}^P(\lambda))+\epsilon
 \mathcal{L}(\lambda)$, for any small rational $\epsilon >0$ (cf. loc. cit.
 for the terminology). Thus, by [T, Theorem 3.2.a and Remarks 3.3], we get that
 (for any $\lambda \in \bar{C}_k\cap\bar{\mu}$)
 \begin{equation}\label{eq105}
 \mu_{0}(\lambda)=\chi\left(X_T(C_k),
\ \pi^{T}_{*}q^*(\mathcal{L}^P(\lambda))\right)=\chi\left(X_T(C_k),
\ \pi^{T}_{*}(\mathcal{L}(\lambda))\right).
\end{equation}
Hence, the identity \eqref{eq13} is established for any $\lambda \in
\bar{C}_k\cap\bar{\mu}$. Thus, by the above proof, $\mu_0(\lambda)=f_{\bar{\mu},k}$, where
$f_{\bar{\mu},k}$ is the polynomial given above.

By the formula \eqref{eq15}, the constant term of
$f_{\Gamma,k}$ is equal to
$$
\chi\left(X_T(C_k), \ \pi^{T}_{*}(\mathcal{L}(0))\right),
$$
which is 1 by the identity \eqref{eq105}, since $\mu_0(0)=1$.
This completes the proof of the theorem.
\end{proof}
\begin{remark} \label{newremark} (a) By a similar proof, we can obtain a piecewise polynomial
behavior of the dimension of any weight space (of a fixed weight $\mu$) in any
finite dimensional irreducible representation $V(\lambda)$, by considering the
GIT theory associated to the $T$-equivariant line bundle $\mathcal{L}(\lambda)$
 twisted by the character $\mu^{-1}$.

 (b) By a similar proof, we can also obtain a piecewise polynomial
behavior of the dimension of $H$-invariant subspace in any
finite dimensional irreducible representation $V(\lambda)$ of $G$, where
$H\subset G$ is a
reductive subgroup. In this case, we will need to apply the
GIT theory to the line bundle $\mathcal{L}(\lambda)$ itself but with respect to
the group $H$. However, in this general case, we do not have a precise
description of the lattice $\Gamma$ as in Theorem \ref{thm3.1}, nor do we have an
explicit description of the GIT classes of maximal dimension as in Corollary
\ref{chamber}.

(c) As pointed out by Kapil Paranjape, we can obtain the polynomial behaviour of $\chi
\left(X_T(C_k),\pi^{T}_{*}(\mathcal{L}(\lambda))\right)$ as in the above
proof (by using the Riemann-Roch theorem) more simply by applying Snapper's
theorem (cf. [K, Theorem in Section 1]). However, the use of Riemann-Roch theorem
gives a more precise result.
\end{remark}

\section{ Examples of $A_2$ and $B_2$} \label{sec5}

In this section, we calculate the dimension of the $T$-invariant subspace in an irreducible
representation of the  rank 2 groups $G$ of types $A_2$ and $B_2$. In these cases, we can do the calculation via
certain well-known branching laws to certain subgroups. But
lacking any such general branching laws, we have not been able to handle $G_2$.

We recall that
irreducible representations of $\GL_{n+1}(\bc)$ are parametrized by their
highest weights,
which is an $(n+1)$-tuple of integers:

$$\lambda_1 \geq \lambda_2 \geq \cdots \geq  \lambda_n \geq  \lambda_{n+1}.$$

It is a well-known theorem that an irreducible representation of $\GL_{n+1}(\bc)$ when restricted
to $\GL_n(\bc)$ decomposes as a sum of irreducible representations with
highest weights
$(\mu_1 \geq  \mu_2  \geq \cdots  \geq \mu_n)$ with
$$\lambda_1 \geq \mu_1 \geq \lambda_2 \geq \mu_2 \geq \cdots \geq  \lambda_n \geq \mu_n \geq
\lambda_{n+1},$$
and that these representations of $\GL_n(\bc)$ appear with multiplicity exactly
one (cf. [GW, Theorem 8.1.1]).

Note that for an irreducible representation of $\GL_{n+1}(\bc)$ to have a nonzero zero weight space, it is necessary (and sufficient)
for it to have trivial central character. For determining the zero weight space
of a  representation of $\GL_{n+1}(\bc)$ with trivial central character,  it suffices to restrict it to
$\GL_n(\bc)$ and  consider those summands which have zero weight spaces for $\GL_n(\bc)$,
and then to add these zero weight spaces of $\GL_n(\bc)$.

We  calculate the dimension of the zero weight space of an  irreducible
representation of $\GL_3(\bc)$ by restricting
the representation to $\GL_2(\bc)$, and noting that an irreducible representation of $\GL_2(\bc)$ parametrized
by $(\mu_1 \geq \mu_2)$ has a nonzero weight space if and only if $\mu_1+\mu_2 =0$
 and, in this case, the dimension
of the zero weight space is 1.
With these preliminaries, we leave the details of the straightforward proof of the following lemma to the reader.

\begin{lemma} An irreducible representation of
$\GL_{3}(\bc)$ with highest weight
$(\lambda_1 \geq \lambda_2 \geq  \lambda_3),$
and with trivial central character, i.e., $\lambda_1 +  \lambda_2 +  \lambda_3 = 0$,
has zero weight space of dimension

\begin{enumerate}

\item  $\lambda_1-\lambda_2+1$, if  $\lambda_2 \geq 0$, and

\item  $\lambda_2-\lambda_3+1$, if  $\lambda_2 \leq 0$.

\end{enumerate}
\end{lemma}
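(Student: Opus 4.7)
The plan is to apply the $\GL_3(\bc)\downarrow\GL_2(\bc)$ branching law recalled just before the lemma, combined with the fact (also recalled) that an irreducible representation of $\GL_2(\bc)$ with highest weight $(\mu_1\geq \mu_2)$ has a nonzero zero weight space iff $\mu_1+\mu_2=0$, in which case the zero weight space is one-dimensional. Therefore the dimension of the zero weight space of the irreducible $\GL_3(\bc)$-module with highest weight $(\lambda_1,\lambda_2,\lambda_3)$ equals the number of integer pairs $(\mu_1,\mu_2)$ satisfying the interlacing inequalities
$$\lambda_1\;\geq\;\mu_1\;\geq\;\lambda_2\;\geq\;\mu_2\;\geq\;\lambda_3$$
together with the balance condition $\mu_1+\mu_2=0$.

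Eliminating $\mu_2=-\mu_1$, the four interlacing inequalities collapse to a single constraint
$$\max(\lambda_2,-\lambda_2)\;\leq\;\mu_1\;\leq\;\min(\lambda_1,-\lambda_3),$$
so the sought dimension equals the number of integer points in this interval. The trace condition $\lambda_1+\lambda_2+\lambda_3=0$ is then used to simplify $\max$ and $\min$ according to the sign of $\lambda_2$: if $\lambda_2\geq 0$ then $\max(\lambda_2,-\lambda_2)=\lambda_2$, and the identity $-\lambda_3=\lambda_1+\lambda_2\geq\lambda_1$ yields $\min(\lambda_1,-\lambda_3)=\lambda_1$, giving $\lambda_1-\lambda_2+1$ integer points; symmetrically, if $\lambda_2\leq 0$ then $\max(\lambda_2,-\lambda_2)=-\lambda_2$, and $\lambda_1+\lambda_3=-\lambda_2\geq 0$ forces $\lambda_1\geq -\lambda_3$, so $\min(\lambda_1,-\lambda_3)=-\lambda_3$, giving $\lambda_2-\lambda_3+1$ integer points.

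There is no genuine obstacle here; the argument is a purely combinatorial counting once the two input facts are in place. The only bookkeeping is checking that the resulting interval is nonempty in both cases (so that the linear formulas produce the correct nonnegative answer), which follows immediately from $\lambda_1\geq \lambda_2\geq \lambda_3$ together with the trace condition. This completes the plan.
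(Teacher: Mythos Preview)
Your argument is correct and is exactly the approach the paper intends: the paper explicitly sets up the $\GL_3\downarrow\GL_2$ branching law and the $\GL_2$ zero-weight fact, then leaves the straightforward count to the reader. Your reduction to counting integers $\mu_1$ in $[\max(\lambda_2,-\lambda_2),\min(\lambda_1,-\lambda_3)]$ and case split on the sign of $\lambda_2$ is precisely that omitted computation.
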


We next recall that irreducible representations of $\SO_{2n+1}(\bc)$ are
parametrized by their highest weights,
which is an $n$-tuple of integers with
$$\lambda_1 \geq \lambda_2 \geq \cdots \geq  \lambda_n \geq 0 .$$
Similarly, the  irreducible representations of $\SO_{2n}(\bc)$ are parametrized
by their highest weights,
which is an $n$-tuple of integers with
$$\lambda_1 \geq \lambda_2 \geq \cdots \geq  |\lambda_n| .$$
It is a well-known theorem that an irreducible representation of $\SO_{2n+1}(\bc)$ when restricted
to $\SO_{2n}(\bc)$ decomposes as a sum of irreducible representations with highest
weights
$(\mu_1 \geq  \mu_2  \geq \cdots  \geq |\mu_n|)$ with
$$\lambda_1 \geq \mu_1 \geq \lambda_2 \geq \mu_2 \geq \cdots \geq  \lambda_n \geq |\mu_n|,$$
and that these representations of $\SO_{2n}(\bc)$ appear with multiplicity
exactly one (cf. [GW, Theorem 8.1.3]).

We use this branching law from $\SO_5(\bc)$ to $\SO_4(\bc)$ to calculate the
dimension of the
zero weight space in an irreducible representation of $\SO_5(\bc)$. For this,
we again note that the
zero weight space in a $\SO_5(\bc)$-representation  is captured by those
subrepresentations of
$\SO_4(\bc)$ which have nonzero zero weight space. Further, note that $\SO_4(\bc)$ being the quotient
of ${\rm SU}_2(\bc) \times {\rm SU}_2(\bc)$ by the diagonal central element $\pm 1$, an irreducible representation of
$\SO_4(\bc)$ has nonzero zero weight space if and only if its central character is trivial, and in this case
the zero weight space is 1 dimensional.
We also need to  use the fact that
the  irreducible representation of $\SO_{2n}(\bc)$  parametrized by
$(\lambda_1, \lambda_2, \cdots , \lambda_n)$ has trivial central character if and
only if $\lambda_1+ \lambda_2+  \cdots +\lambda_n$ is an even integer.

With these preliminaries, we leave the details of the straightforward proof of the following lemma to the reader.

\begin{lemma} An irreducible representation of
$\SO_{5}(\bc)$ with highest weight
$\lambda_1 \geq \lambda_2 \geq  0)$
has zero weight space of dimension

\begin{enumerate}

\item  $(\lambda_1-\lambda_2)\cdot \lambda_2 + \displaystyle{\frac{\lambda_1 +
\lambda_2}{2}} + 1$,
if  $\lambda_1 + \lambda_2$ is an even integer.

\item  $(\lambda_1-\lambda_2)\cdot \lambda_2 +
\displaystyle{\frac{\lambda_1 + \lambda_2}{2}} + \frac{1}{2}$,
 if  $\lambda_1 + \lambda_2$ is an odd integer.

\end{enumerate}
\end{lemma}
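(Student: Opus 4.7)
The plan is to combine the $\SO_5(\bc) \downarrow \SO_4(\bc)$ branching law (already recalled above) with the elementary representation theory of $\SO_4(\bc)$ stated just before the lemma, reducing the problem to an integer-lattice point count in the plane.

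First, I would use the fact that for any reductive subgroup $H \subset G$ containing $T$, the zero weight space is preserved under restriction: $V(\lambda)_0 = V(\lambda)\big|_H\bigr)_0$. Applying this to $\SO_4(\bc) \subset \SO_5(\bc)$ and invoking the multiplicity-free branching law, I get
\begin{equation*}
\dim V(\lambda_1,\lambda_2)_0 \;=\; \sum_{(\mu_1,\mu_2)} \dim W(\mu_1,\mu_2)_0,
\end{equation*}
where the sum ranges over all integer pairs $(\mu_1, \mu_2)$ satisfying $\lambda_1 \geq \mu_1 \geq \lambda_2 \geq |\mu_2|$, and $W(\mu_1, \mu_2)$ denotes the corresponding irreducible $\SO_4(\bc)$-module.

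Next, invoking the two facts already recorded -- that $W(\mu_1, \mu_2)$ has a nonzero zero weight space if and only if its central character is trivial (equivalently, $\mu_1 + \mu_2$ is even), and that in this case the zero weight space is one-dimensional -- the computation reduces to counting
\begin{equation*}
\lvert S \rvert, \quad S := \{(\mu_1,\mu_2) \in \bz^2 : \lambda_2 \leq \mu_1 \leq \lambda_1,\ -\lambda_2 \leq \mu_2 \leq \lambda_2,\ \mu_1 + \mu_2 \in 2\bz\}.
\end{equation*}
I would handle this via the standard parity-indicator trick $\mathbf{1}_{\mu_1+\mu_2 \in 2\bz} = \tfrac{1}{2}(1 + (-1)^{\mu_1+\mu_2})$, which splits $|S|$ into the total count $(\lambda_1 - \lambda_2 + 1)(2\lambda_2 + 1)/2$ and half of the product of two alternating sums. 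The sum $\sum_{\mu_2 = -\lambda_2}^{\lambda_2}(-1)^{\mu_2}$ equals $(-1)^{\lambda_2}$, while $\sum_{\mu_1 = \lambda_2}^{\lambda_1}(-1)^{\mu_1}$ equals $(-1)^{\lambda_2}$ when $\lambda_1 - \lambda_2$ is even and vanishes otherwise. Hence the correction term contributes $\tfrac{1}{2}$ exactly when $\lambda_1 + \lambda_2$ is even, and a one-line algebraic simplification of $\tfrac{1}{2}(2\lambda_2+1)(\lambda_1-\lambda_2+1)$ into $(\lambda_1-\lambda_2)\lambda_2 + (\lambda_1+\lambda_2)/2 + \tfrac{1}{2}$ yields the two stated formulas.

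There is no serious obstacle here: the underlying tools (branching, central character, zero weight space of a sum) are standard, and the only delicate step is the parity bookkeeping. The mild point to watch is that $\SO_4(\bc)$-summands failing the parity condition are genuinely present in the branching decomposition but simply contribute zero to the zero weight space count, as opposed to being absent; the generating-function $(-1)^{\mu_1+\mu_2}$ trick sidesteps the need for any case split on the parities of $\lambda_1$ and $\lambda_2$ individually.
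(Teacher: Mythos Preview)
Your proposal is correct and follows exactly the route the paper outlines: restrict via the $\SO_5\downarrow\SO_4$ branching law, use the parity criterion for trivial central character on $\SO_4$, and count lattice points with $\mu_1+\mu_2$ even. The paper leaves this counting ``to the reader,'' and your parity-indicator computation is a clean way to carry it out.
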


\section{The Example of  $\PGL_4$} \label{sec6}

In this section, we compute the dimension of the zero weight space of any irreducible representation of  $G=\PGL_4(\bc)$.

\begin{theorem}
For an irreducible  representation of $\GL_4(\bc)$ with highest weight
$(\lambda_1 \geq \lambda_2 \geq   \lambda_3 \geq  \lambda_4),$
and with trivial central character, i.e.,
$\lambda_1 +  \lambda_2 +   \lambda_3 +  \lambda_4 = 0,$ the dimension
$d(\lambda_1,\lambda_2,\lambda_3,\lambda_4)$ of the zero weight space is given as a piecewise
polynomial in the domain $\lambda_1 \geq \lambda_2 \geq   \lambda_3 \geq  \lambda_4$ as follows:

\begin{enumerate}

\item $\lambda_2 \leq 0$, where it is given by the polynomial
$$p_1(\lambda_1,\lambda_2,\lambda_3,\lambda_4)=
\frac{1}{2} (\lambda_2-\lambda_3+1)(\lambda_3-\lambda_4+1)
(\lambda_2-\lambda_4+2).$$

\item $\lambda_3 \geq 0$, where it is given by the polynomial
$$p_2(\lambda_1,\lambda_2,\lambda_3,\lambda_4)= \frac{1}{2} (\lambda_1-\lambda_2+1)(\lambda_2-\lambda_3+1) (\lambda_1-\lambda_3+2).$$

\item $\lambda_2 > 0, \lambda_3 <0, \lambda_1 + \lambda_4 \geq 0$, where it
is given by the polynomial
\begin{equation*}
p_3(\lambda_1,\lambda_2,\lambda_3,\lambda_4) =  -\frac{1}{2}
(\lambda_1+\lambda_2+2\lambda_3+1)(-\lambda_1\lambda_2+2\lambda_2^2+
\lambda_1\lambda_3+\lambda_2\lambda_3+\lambda_3^2-\lambda_1+\lambda_3-2).
\end{equation*}

\item $\lambda_2 > 0, \lambda_3 <0, \lambda_1 + \lambda_4 \leq 0$, where it is
given by the polynomial
\begin{eqnarray*}
p_4(\lambda_1,\lambda_2,\lambda_3,\lambda_4) = \frac{1}{2} (-\lambda_1+\lambda_2-1)(-\lambda_1\lambda_2+\lambda_1\lambda_3+\lambda_2\lambda_3+3\lambda_3^2-
\lambda_1-2\lambda_2-\lambda_3-2).
 \end{eqnarray*}

\end{enumerate}

The automorphism $(\lambda_1, \lambda_2,\lambda_3,\lambda_4)\longrightarrow
(-\lambda_4, -\lambda_3, -\lambda_2,-\lambda_1)$, which corresponds to
taking a representation to its dual, interchanges the regions (1) and (2), and their polynomials, and similarly regions (3) and (4) and their
polynomials. Further, we have
$$p_3-p_4 = (\lambda_2+\lambda_3) -(\lambda_2+\lambda_3)^3.$$
\end{theorem}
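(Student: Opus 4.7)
The plan is to reduce the problem to a two–variable summation via the classical branching rule from $\GL_4(\bc)$ to $\GL_3(\bc)$. By that rule, an irreducible $\GL_4$-module with highest weight $(\lambda_1,\lambda_2,\lambda_3,\lambda_4)$ decomposes, as a $\GL_3$-module, as the multiplicity-free sum of irreducibles with highest weight $(\mu_1,\mu_2,\mu_3)$ satisfying the interlacing conditions $\lambda_1\geq\mu_1\geq\lambda_2\geq\mu_2\geq\lambda_3\geq\mu_3\geq\lambda_4$. Only those summands with trivial central character $\mu_1+\mu_2+\mu_3=0$ contribute to the zero weight space, and by the $\GL_3$ lemma of Section \ref{sec5} the contribution is $\mu_1-\mu_2+1$ when $\mu_2\geq 0$ and $\mu_2-\mu_3+1$ when $\mu_2\leq 0$ (the two formulas agreeing at $\mu_2=0$). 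Eliminating $\mu_1=-\mu_2-\mu_3$ gives
$$ d(\lambda)=\sum_{(\mu_2,\mu_3)} \psi(\mu_2,\mu_3),$$
where the sum runs over integer pairs in $[\lambda_3,\lambda_2]\times[\lambda_4,\lambda_3]$ subject to the further constraint $-\mu_2-\mu_3\in[\lambda_2,\lambda_1]$, and $\psi$ is the piecewise linear function just described.

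Next I would analyse when the two constraints on $\mu_1$ are automatic. Using $\lambda_1=-\lambda_2-\lambda_3-\lambda_4$, the condition $\mu_1\leq\lambda_1$ becomes $\mu_2+\mu_3\geq\lambda_2+\lambda_3+\lambda_4$, which holds throughout the summation range precisely when $\lambda_2\leq 0$; dually, $\mu_1\geq\lambda_2$ becomes $\mu_2+\mu_3\leq-\lambda_2$, which is automatic precisely when $\lambda_3\geq 0$. In case (1) the sum is therefore unconstrained and only the $\mu_2\leq 0$ formula applies; a routine double sum of a linear function over the rectangle $[\lambda_3,\lambda_2]\times[\lambda_4,\lambda_3]$ produces exactly $p_1$. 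Case (2) then follows from case (1) by the outer automorphism $(\lambda_1,\lambda_2,\lambda_3,\lambda_4)\mapsto(-\lambda_4,-\lambda_3,-\lambda_2,-\lambda_1)$, which corresponds to passing to the contragredient representation and hence preserves $d$, while interchanging the two regions and the two polynomials.

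For cases (3) and (4), where $\lambda_2>0$ and $\lambda_3<0$, neither $\mu_1$-constraint is automatic, so I would split the $\mu_2$-range at $0$ and apply the appropriate formula on each piece. The decisive point is that the binding side of the constraint $\lambda_2+\lambda_3+\lambda_4\leq\mu_2+\mu_3\leq-\lambda_2$ switches across the hyperplane $\lambda_1+\lambda_4=0$, equivalently $\lambda_2+\lambda_3=0$, which is exactly the wall separating (3) and (4) and which coincides with one of the GIT walls of Corollary \ref{chamber}. On each chamber the remaining double sum is a sum of a linear function over a trapezoidal integer region, for which standard formulas produce the cubic $p_3$ or $p_4$; the duality $\lambda_i\mapsto-\lambda_{5-i}$ interchanges (3) with (4) and halves the work.

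The main obstacle is purely combinatorial: in cases (3) and (4) the summation bounds interact nontrivially with the cutoff $\mu_2=0$ and with the switched $\mu_1$-constraint, so one must carefully partition the $(\mu_2,\mu_3)$-rectangle into subregions and sum linear functions on each. The identity $p_3-p_4=(\lambda_2+\lambda_3)-(\lambda_2+\lambda_3)^3$ is then a direct polynomial verification once $p_3$ and $p_4$ are in hand; note that it vanishes on $\lambda_2+\lambda_3=0$, matching the continuity of $d$ across the wall between (3) and (4), and thus provides a useful consistency check. The overall degree bound $\dim X-\ell=6-3=3$ from Theorem \ref{thm3.2} is also consistent with each $p_i$ being a cubic.
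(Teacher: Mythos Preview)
Your approach is essentially the paper's: branch to $\GL_3$, apply the $\GL_3$ lemma, and do a case analysis on the shape of the constrained summation region; the paper eliminates $\mu_3$ rather than $\mu_1$ and computes all four regions directly (in fact splitting each of (3) and (4) into two further subcases depending on the sign of $2\lambda_2+\lambda_3$, respectively $\lambda_2+2\lambda_3$, which a posteriori give the same polynomial), whereas you use the outer automorphism to deduce (2) from (1) and (4) from (3). One small slip: the constraint $\mu_1\geq\lambda_2$, i.e.\ $\mu_2+\mu_3\leq-\lambda_2$, is automatic on the rectangle precisely when $2\lambda_2+\lambda_3\leq 0$, not when $\lambda_3\geq 0$; this is harmless since in case (1) you have $\lambda_3\leq\lambda_2\leq 0$ so $2\lambda_2+\lambda_3\leq 0$ anyway, and you never use the claim for case (2).
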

\begin{proof}
The method we follow
to prove this theorem  is also  based on the restriction of a $\GL_4(\bc)$ representation
to $\GL_3(\bc)$, as we did in the previous
section for the calculation of the zero weight space for
$\GL_3(\bc)$-representations. We start with an
irreducible representation of $\GL_4(\bc)$ with highest weight
$(\lambda_1 \geq \lambda_2 \geq   \lambda_3 \geq  \lambda_4),$
and with trivial central character, i.e.,
$\lambda_1 +  \lambda_2 +   \lambda_3 +  \lambda_4 = 0.$

We  look at irreducible representations of $\GL_3(\bc)$ with highest weight
$(\mu_1 \geq  \mu_2  \geq \mu_3)$  appearing in this representation of
$\GL_4(\bc)$. Thus, we have
$$\lambda_1 \geq \mu_1 \geq \lambda_2 \geq \mu_2 \geq   \lambda_3 \geq \mu_3 \geq
\lambda_4.$$
For analyzing the zero weight space, it suffices to consider only those representations
of $\GL_3(\bc)$
with highest weight $(\mu_1,\mu_2,\mu_3)$ with
$\mu_1 + \mu_2 +\mu_3=0;$
it is actually keeping track of this central character condition (on $\GL_3(\bc)$) that complicates our analysis.

Denote the dimension of the zero weight space in the irreducible representation of
$\GL_4(\bc)$ with highest weight
$(\lambda_1 \geq \lambda_2 \geq   \lambda_3 \geq  \lambda_4)$ by
$d(\lambda_1,\lambda_2,\lambda_3,\lambda_4)$.  Similarly,
denote the dimension of the zero weight space in the irreducible representation of
$\GL_3(\bc)$ with highest weight
$(\mu_1,\mu_2,\mu_3)$ by $d(\mu_1,\mu_2,\mu_3)$;
we will always assume that the central character
of this representation of $\GL_3(\bc)$ is trivial, and so $d(\mu_1,\mu_2,\mu_3)$ is a positive integer, explicitly given by
Lemma 5.1.  We remind the reader from loc. cit. that the value of  $d(\mu_1,\mu_2,\mu_3)$ is a
polynomial in $(\mu_1,\mu_2,\mu_3)$ (of degree 1) which depends on
whether $\mu_2$ is non-negative or non-positive.

Denote the interval $[\lambda_1, \lambda_2]$ by $I_1$ (we abuse the notation
$[\lambda_1, \lambda_2]$ which is customarily denoted by $[\lambda_2, \lambda_1]$),
the interval $[\lambda_2, \lambda_3]$ by $I_2$,
and the interval $[\lambda_3, \lambda_4]$ by $I_3$. Our problem consists in
choosing
integers $\mu_i \in I_i$ such that $\mu_1 + \mu_2 +\mu_3=0$.

One lucky situation is when the set $I_j + I_k \subset -I_\ell$, for a triple $\{i,j,k\} = \{1,2,3\}$,
 in which case, one can choose $\mu_j \in I_j$, $\mu_k \in I_k$ arbitrarily, and
then $\mu_\ell = -(\mu_j + \mu_k)$ automatically belongs to $I_\ell$. This is what happens in
cases I and
II below; but the other cases that we deal with in III, $\dots$, VI, the analysis is considerably more
complicated.

\vspace{4mm}

{\bf Case I:} $\lambda_2 \leq 0$, and therefore $ \lambda_1 \geq 0 \geq \lambda_2 \geq \lambda_3 \geq \lambda_4$.

This implies that $\mu_1 \geq 0 \geq \mu_2 \geq \mu_3$; in particular, in this case $\mu_2$ is always $\leq 0$.
Further, $I_2 + I_3 = [\lambda_2+\lambda_3, \lambda_3+\lambda_4]$ is contained in $-I_1 = [-\lambda_2,-\lambda_1].$

Therefore, by Lemma 5.1,
{\scriptsize
\begin{eqnarray*} d(\lambda_1,\lambda_2,\lambda_3,\lambda_4)
& = & \sum_{\mu_i \in I_i} d(\mu_1,\mu_2,\mu_3) \\
& = & \sum_{\mu_i \in I_i} (\mu_2 - \mu_3 +1)  \\
& = & \sum_{ \begin{array}{c} {\lambda_2 \geq \mu_2 \geq \lambda_3}  \\ {\lambda_3 \geq \mu_3 \geq \lambda_4} \end{array}} (\mu_2 - \mu_3 +1)  \\
& = & \sum_{ \lambda_3 \geq \mu_3 \geq \lambda_4}
\left [\frac{(\lambda_2 + \lambda_3)(\lambda_2-\lambda_3+1) -(\mu_3-1)(\lambda_2 -
\lambda_3 +1)}{2} \right ] \\
& = &
\frac{(\lambda_2 + \lambda_3)(\lambda_2 -\lambda_3+1)(\lambda_3 -\lambda_4+1)}{2} -
\frac{(\lambda_2-\lambda_3+1)(\lambda_3+\lambda_4-2) (\lambda_3-\lambda_4+1)}{2}  \\
& = & \frac{1}{2} (\lambda_2-\lambda_3+1)(\lambda_3-\lambda_4+1) (\lambda_2-\lambda_4+2).
\end{eqnarray*}
}

{\bf Case II:} $\lambda_3 \geq 0$, and therefore $ \lambda_1  \geq \lambda_2 \geq \lambda_3 \geq 0 \geq \lambda_4$.

This implies that $\mu_1  \geq \mu_2  \geq  0$; in particular, in this case $\mu_2$ is always $\geq 0$.
Further, $I_1 + I_2 = [\lambda_1+\lambda_2, \lambda_2+ \lambda_3]$ is contained in $-I_3 = [-\lambda_4,-\lambda_3].$

Therefore, by Lemma 5.1,
\begin{eqnarray*} d(\lambda_1,\lambda_2,\lambda_3,\lambda_4)
& =  & \sum_{\mu_i \in I_i} d(\mu_1,\mu_2,\mu_3)  \\
& = &\sum_{\mu_i \in I_i} (\mu_1 - \mu_2 +1)  \\
& = & \sum_{\begin{array}{c} \lambda_1 \geq \mu_1 \geq \lambda_2 \\ \lambda_2 \geq \mu_2 \geq \lambda_3 \end{array}} (\mu_1 - \mu_2 +1)  \\
& = & \frac{1}{2} (\lambda_1-\lambda_2+1)(\lambda_2-\lambda_3+1) (\lambda_1-\lambda_3+2).
\end{eqnarray*}

{\bf Rest of the  cases:} $\lambda_2 >  0 > \lambda_3 $, and therefore
 $ \lambda_1  \geq \lambda_2 > 0 >  \lambda_3  \geq \lambda_4$.

Given that $\mu_i \in I_i$ with $\mu_1+\mu_2+\mu_3=0$, we find that
$$\lambda_3 \geq -(\mu_1+\mu_2) \geq \lambda_4,$$
and therefore,
$$-\lambda_4 -\mu_2 \geq \mu_1 \geq -\lambda_3-\mu_2.$$
Since we already have
$$\lambda_1 \geq \mu_1 \geq \lambda_2,$$
$\mu_1$ is in the intersection of the two intervals
$-\lambda_4 -\mu_2 \geq \mu_1 \geq -\lambda_3-\mu_2$
and $\lambda_1 \geq \mu_1 \geq \lambda_2.$ Therefore, $\mu_1$ must belong to the
interval
$$ I(\mu_2)= [min (\lambda_1,-\lambda_4-\mu_2), max(-\lambda_3-\mu_2,\lambda_2)].$$

Conversely, it is clear that if $\mu_2 \in I_2$, $\mu_1 \in I(\mu_2),$ and $\mu_3=-(\mu_1+\mu_2)$, then each of the $\mu_i$ belongs to
$I_i$, and $\mu_1+\mu_2+\mu_3=0$.

Thus, we can start the calculation of the dimension of the zero weight space
in the representation of $\GL_4(\bc)$
with highest weight $(\lambda_1 \geq \lambda_2 \geq   \lambda_3 \geq  \lambda_4),$
and with trivial central character as
\begin{eqnarray*} d(\lambda_1,\lambda_2,\lambda_3,\lambda_4)   &= & \sum_{\mu_i \in I_i} d(\mu_1,\mu_2,\mu_3) \\
& = & \sum_{ \begin{array}{c}\lambda_2 \geq \mu_2 > 0 \\
\mu_1 \in I(\mu_2)
\end{array} } (\mu_1 - \mu_2 +1)  +
 \sum_{ \begin{array}{c}0 \geq \mu_2 \geq \lambda_3  \\
\mu_1 \in I(\mu_2)
\end{array}} (\mu_2 - \mu_3 +1).
\end{eqnarray*}
At this point, we assume that $\lambda_1 + \lambda_4 \geq 0$. In this case, if $\mu_2\geq 0$, then
$\lambda_1 \geq -\lambda_4 -\mu_2$. On the other hand, under the same condition (i.e., $\lambda_1+\lambda_4 \geq 0$),
if $\mu_2 \leq 0$, then $-\lambda_3 -\mu_2 \geq \ \lambda_2$. This means that for $\mu_2\geq 0$,
$ I(\mu_2)= [min (\lambda_1,-\lambda_4-\mu_2), max(-\lambda_3-\mu_2,\lambda_2)] =
[-\lambda_4-\mu_2, max(-\lambda_3-\mu_2,\lambda_2)]$, and for $\mu_2 \leq 0$, $I(\mu_2) =
[min (\lambda_1,-\lambda_4-\mu_2), -\lambda_3-\mu_2]$. Therefore, we get
\begin{eqnarray*}  d(\lambda_1,\lambda_2,\lambda_3,\lambda_4)  &= & \sum_{\mu_i \in I_i} d(\mu_1,\mu_2,\mu_3) \\
& = & \sum_{\begin{array}{c} \lambda_2 \geq \mu_2 > 0 \\   -\lambda_4-\mu_2 \geq \mu_1 \geq max(-\lambda_3 -\mu_2, \lambda_2) \end{array} } (\mu_1 - \mu_2 +1)  \\
&+&
 \sum_{ \begin{array}{c} 0 \geq \mu_2 \geq \lambda_3  \\
 min(\lambda_1, -\lambda_4-\mu_2) \geq \mu_1 \geq -\lambda_3 -\mu_2 \end{array} }
 (\mu_1 +2 \mu_2 +1).
\end{eqnarray*}
At this point, we assume that besides $\lambda_1+\lambda_4 \geq 0$, we also have $2\lambda_2+\lambda_3 \geq 0$; this latter condition has the effect that
the region $[\lambda_2,0]$ where $\mu_2$ is supposed to belong, splits into two regions where $max(\lambda_3-\mu_2,\lambda_2)$ takes the two
possible options. Similarly, the region $[0,\lambda_3]$ where $\mu_2$ belongs in the second sum gets divided into two regions.

\vspace{3mm}

{\bf Case III:} $\lambda_1 + \lambda_4 \geq 0$ and $2\lambda_2+\lambda_3 \geq 0$.
{\scriptsize
\begin{eqnarray*}  d(\lambda_1,\lambda_2,\lambda_3,\lambda_4)  & = & \sum_{\mu_i \in I_i} d(\mu_1,\mu_2,\mu_3) \\
& = &
\sum_{\begin{array}{c} \lambda_2 \geq \mu_2 \geq -(\lambda_2+\lambda_3) \\   -\lambda_4-\mu_2 \geq \mu_1 \geq  \lambda_2 \end{array} } (\mu_1 - \mu_2 +1)  +
\!\!\!\sum_{\begin{array}{c} -(\lambda_2 +\lambda_3) > \mu_2 > 0 \\   -\lambda_4-\mu_2 \geq \mu_1 \geq  -\lambda_3 -\mu_2 \end{array} } (\mu_1 - \mu_2 +1)   \\
&+& \sum_{ \begin{array}{c} 0 \geq \mu_2 \geq  -(\lambda_1 + \lambda_4)  \\   -\lambda_4-\mu_2  \geq \mu_1 \geq -\lambda_3 -\mu_2 \end{array} } (\mu_1 + 2\mu_2  +1)
+ \!\!\!\sum_{ \begin{array}{c}   -(\lambda_1 + \lambda_4) > \mu_2 \geq \lambda_3   \\   \lambda_1 \geq \mu_1 \geq -\lambda_3 -\mu_2 \end{array} } (\mu_1 + 2 \mu_2 +1) \\
&=& \frac{1}{4}[-4\lambda_1^3-2\lambda_1^2\lambda_2-2\lambda_2^3-6\lambda_1^2\lambda_3+4\lambda_1\lambda_2\lambda_3-4\lambda_2^2\lambda_3-4\lambda_1\lambda_3^2-2\lambda_3^3-6\lambda_1^2\lambda_4-4\lambda_1\lambda_2\lambda_4\\
& &-8\lambda_1\lambda_3\lambda_4-4\lambda_2\lambda_3\lambda_4-4\lambda_3^2\lambda_4+4\lambda_1\lambda_4^2+4\lambda_2\lambda_4^2+6\lambda_4^3-12\lambda_1^2-5\lambda_1\lambda_2+\lambda_2^2-7\lambda_1\lambda_3\\
& &+8\lambda_2\lambda_3+\lambda_3^2-34\lambda_1\lambda_4-15\lambda_2\lambda_4-13\lambda_3\lambda_4-20\lambda_4^2+5\lambda_1+3\lambda_2+5\lambda_3-\lambda_4+4] \\
&=&
\frac{1}{4}[2\lambda_1^2\lambda_2-2\lambda_1\lambda_2^2-4\lambda_2^3-2\lambda_1^2\lambda_3-10\lambda_2^2\lambda_3-6\lambda_1\lambda_3^2-
6\lambda_2\lambda_3^2-4\lambda_3^3+2\lambda_1^2+4\lambda_1\lambda_2\\
& &-4\lambda_2^2-4\lambda_2\lambda_3-6\lambda_3^2+6\lambda_1+4\lambda_2+6\lambda_3+4],\\
&=&
-\frac{1}{2} (\lambda_1+\lambda_2+2\lambda_3+1)
(-\lambda_1\lambda_2+2\lambda_2^2+\lambda_1\lambda_3+\lambda_2\lambda_3+
\lambda_3^2-\lambda_1+\lambda_3-2),
\end{eqnarray*}
}where in the second last equality, we have used the equation
$\lambda_4=-(\lambda_1+\lambda_2+\lambda_3)$ to write
the polynomial in only $\lambda_1,\lambda_2,\lambda_3$.

\vspace{3mm}

{\bf Case IV:} $\lambda_1 + \lambda_4 \geq 0$ and $2\lambda_2+\lambda_3 <  0$.
{\scriptsize
\begin{eqnarray*}  d(\lambda_1,\lambda_2,\lambda_3,\lambda_4)  & = & \sum_{\mu_i \in I_i} d(\mu_1,\mu_2,\mu_3) \\
& = &
\sum_{\begin{array}{c} \lambda_2 \geq \mu_2 > 0 \\   -\lambda_4-\mu_2 \geq \mu_1 \geq  -\lambda_3-\mu_2 \end{array} }\!\!\! (\mu_1 - \mu_2 +1)  \\
&+& \sum_{ \begin{array}{c} 0 \geq \mu_2 \geq   -(\lambda_1 + \lambda_4)  \\   -\lambda_4-\mu_2  \geq \mu_1 \geq -\lambda_3 -\mu_2 \end{array} } \!\!\!(\mu_1 + 2\mu_2  +1)
+ \sum_{ \begin{array}{c}   -(\lambda_1 + \lambda_4) > \mu_2 \geq \lambda_3   \\   \lambda_1 \geq \mu_1 \geq -\lambda_3 -\mu_2 \end{array} } \!\!\!(\mu_1 + 2 \mu_2 +1) \\
&=&
\frac{1}{4}[-2\lambda_1^3-2\lambda_1^2\lambda_2-2\lambda_1^2\lambda_3+2\lambda_1\lambda_2\lambda_3-4\lambda_2^2\lambda_3-2\lambda_1\lambda_3^2-2\lambda_2\lambda_3^2-2\lambda_3^3-4\lambda_1^2\lambda_4-4\lambda_1\lambda_2\lambda_4\\
& &+4\lambda_2^2\lambda_4-4\lambda_1\lambda_3\lambda_4-2\lambda_2\lambda_3\lambda_4-2\lambda_3^2\lambda_4+2\lambda_2\lambda_4^2-2\lambda_3\lambda_4^2+2\lambda_4^3-5\lambda_1^2-3\lambda_1\lambda_2-4\lambda_2^2+\\
& &3\lambda_2\lambda_3+\lambda_3^2-14\lambda_1\lambda_4-7\lambda_2\lambda_4-7\lambda_4^2+\lambda_1-\lambda_2+\lambda_3-5\lambda_4+4] \\
&=&
\frac{1}{4}[2\lambda_1^2\lambda_2-2\lambda_1\lambda_2^2-4\lambda_2^3-2\lambda_1^2\lambda_3-10\lambda_2^2\lambda_3-6\lambda_1\lambda_3^2-6\lambda_2\lambda_3^2-4\lambda_3^3+2\lambda_1^2+4\lambda_1\lambda_2\\
& &-4\lambda_2^2-4\lambda_2\lambda_3-6\lambda_3^2+6\lambda_1+4\lambda_2+6\lambda_3+4] \\
&=& -\frac{1}{2} (\lambda_1+\lambda_2+2\lambda_3+1)
(-\lambda_1\lambda_2+2\lambda_2^2+\lambda_1\lambda_3+\lambda_2\lambda_3+
\lambda_3^2-\lambda_1+\lambda_3-2),
\end{eqnarray*}
}
where again in the second last equality, we have used the equation
$\lambda_4=-(\lambda_1+\lambda_2+\lambda_3)$ to write
the polynomial  in only $\lambda_1,\lambda_2,\lambda_3$.

\vspace{3mm}

{\bf Case V:} $\lambda_1 + \lambda_4 < 0$ and $\lambda_2+ 2\lambda_3 \leq 0$.
{\scriptsize
\begin{eqnarray*}  d(\lambda_1,\lambda_2,\lambda_3,\lambda_4)  & = & \sum_{\mu_i \in I_i} d(\mu_1,\mu_2,\mu_3) \\
& = &
\sum_{\begin{array}{c} \lambda_2 \geq \mu_2 >  (\lambda_2+\lambda_3) \\   -\lambda_4-\mu_2 \geq \mu_1 \geq  \lambda_2 \end{array} } (\mu_1 - \mu_2 +1)  +
\sum_{\begin{array}{c} (\lambda_2 +\lambda_3) \geq  \mu_2 \geq 0 \\   \lambda_1 \geq \mu_1 \geq  \lambda_2 \end{array} } (\mu_1 - \mu_2 +1)   \\
&+& \sum_{ \begin{array}{c} 0 > \mu_2 >  (\lambda_1 + \lambda_4)  \\   \lambda_1 \geq \mu_1 \geq \lambda_2 \end{array} } (\mu_1 + 2\mu_2  +1)
+ \sum_{ \begin{array}{c}   \lambda_1 + \lambda_4  \geq \mu_2 \geq \lambda_3   \\   \lambda_1 \geq \mu_1 \geq -\lambda_3 -\mu_2 \end{array} } (\mu_1 + 2 \mu_2 +1) \\
&=&
\frac{1}{4}[-2\lambda_1^3+2\lambda_1^2\lambda_2+2\lambda_1\lambda_2^2+2\lambda_2^3-2\lambda_1^2\lambda_3+2\lambda_1\lambda_2\lambda_3+2\lambda_2^2\lambda_3-4\lambda_1\lambda_3^2+4\lambda_2\lambda_3^2+4\lambda_1\lambda_2\lambda_4\\
& &+2\lambda_2^2\lambda_4+4\lambda_1\lambda_3\lambda_4-2\lambda_2\lambda_3\lambda_4+4\lambda_1\lambda_4^2+2\lambda_2\lambda_4^2+2\lambda_3\lambda_4^2+2\lambda_4^3-7\lambda_1^2+\lambda_2^2-7\lambda_1\lambda_3\\
& &+3\lambda_2\lambda_3-4\lambda_3^2-14\lambda_1\lambda_4-3\lambda_3\lambda_4-5\lambda_4^2+5\lambda_1-\lambda_2+\lambda_3-\lambda_4+4] \\
&=&
\frac{1}{4}[2\lambda_1^2\lambda_2-2\lambda_1\lambda_2^2-2\lambda_1^2\lambda_3+2\lambda_2^2\lambda_3-6\lambda_1\lambda_3^2+6\lambda_2\lambda_3^2+2\lambda_1^2+4\lambda_1\lambda_2-4\lambda_2^2\\
& &-4\lambda_2\lambda_3-6\lambda_3^2+6\lambda_1+2\lambda_3+4] \\
&= & \frac{1}{2} (-\lambda_1+\lambda_2-1)(-\lambda_1\lambda_2+\lambda_1\lambda_3+\lambda_2\lambda_3+3\lambda_3^2-
\lambda_1-2\lambda_2-\lambda_3-2).
\end{eqnarray*}
}

{\bf Case VI:}  $\lambda_1 + \lambda_4 < 0$ and $\lambda_2+ 2\lambda_3 \geq 0$.
{\scriptsize
\begin{eqnarray*}  d(\lambda_1,\lambda_2,\lambda_3,\lambda_4)  & = & \sum_{\mu_i \in I_i} d(\mu_1,\mu_2,\mu_3) \\
& = &
\sum_{\begin{array}{c} \lambda_2 \geq \mu_2 > (\lambda_2+\lambda_3) \\   -\lambda_4-\mu_2 \geq \mu_1 \geq  \lambda_2 \end{array} } (\mu_1 - \mu_2 +1)  +
\sum_{\begin{array}{c} (\lambda_2 +\lambda_3) \geq  \mu_2 \geq 0 \\   \lambda_1 \geq \mu_1 \geq  \lambda_2 \end{array} } (\mu_1 - \mu_2 +1)   \\
&+& \sum_{ \begin{array}{c} 0 >  \mu_2  \geq \lambda_3  \\   \lambda_1 \geq \mu_1 \geq \lambda_2 \end{array} } (\mu_1 + 2\mu_2  +1)  \\
& =& \frac{1}{4}
[-2\lambda_1^3+2\lambda_1^2\lambda_2+2\lambda_1\lambda_2^2+2\lambda_2^3-2\lambda_1^2\lambda_3+2\lambda_1\lambda_2\lambda_3+2\lambda_2^2\lambda_3-4\lambda_1\lambda_3^2+4\lambda_2\lambda_3^2+4\lambda_1\lambda_2\lambda_4\\
& &+2\lambda_2^2\lambda_4+4\lambda_1\lambda_3\lambda_4-2\lambda_2\lambda_3\lambda_4+4\lambda_1\lambda_4^2+2\lambda_2\lambda_4^2+2\lambda_3\lambda_4^2+2\lambda_4^3-7\lambda_1^2+\lambda_2^2-7\lambda_1\lambda_3\\
& &+3\lambda_2\lambda_3-4\lambda_3^2-14\lambda_1\lambda_4-3\lambda_3\lambda_4-5\lambda_4^2+5\lambda_1-\lambda_2+\lambda_3-\lambda_4+4] \\
&=&
\frac{1}{4}[2\lambda_1^2\lambda_2-2\lambda_1\lambda_2^2-2\lambda_1^2\lambda_3+2\lambda_2^2\lambda_3-6\lambda_1\lambda_3^2+6\lambda_2\lambda_3^2+2\lambda_1^2+4\lambda_1\lambda_2-4\lambda_2^2\\
& &-4\lambda_2\lambda_3-6\lambda_3^2+6\lambda_1+2\lambda_3+4] \\
&= & \frac{1}{2} (-\lambda_1+\lambda_2-1)(-\lambda_1\lambda_2+\lambda_1\lambda_3+\lambda_2\lambda_3+3\lambda_3^2-
\lambda_1-2\lambda_2-\lambda_3-2).
\end{eqnarray*}
}
\end{proof}
\begin{remark} (a)
The fact that the answers in the cases III and IV above are the same (similarly in
the cases V and VI) seems not obvious apriori
  before the final answer is calculated via a software.

  (b) It is curious to note that the polynomials $p_1$ and $p_3$ are equal for $\lambda_2=0$, and the
polynomials $p_2$ and $p_3$ are equal for $\lambda_3=0$, but  the polynomials
$p_1$ and $p_3$ are not equal for $\lambda_3=0$.
This means that $d(\lambda_1,\lambda_2,\lambda_3,\lambda_4)$ which is a polynomial function in the interior of a polyhedral cone
is not always given by the same polynomial on the boundary.

\end{remark}

\vspace{2cm}

S.K.: Department  of Mathematics, University of North Carolina, Chapel Hill, NC 27599, USA (email: shrawan$@$email.unc.edu).

\vspace{3mm}

D.P.: School of Mathematics, Tata Institute of Fundamental Research, Colaba, Mumbai, 400005,  INDIA (email: dprasad$@$math.tifr.res.in).
\end{document}